\newcommand{\N}{\mathbb N}
\newcommand{\R}{\mathbb R}
\newcommand{\E}{\mathbb E}
\newcommand{\pr}{\mathbb P}
\newcommand{\hier}[3][X]{#1^{(#2)}_{#3}}
\newcommand{\half}{\frac{1}{2}}
\newcommand{\salj}{\mathcal{F}}
\newcommand{\cals}{\mathcal{S}}
\begin{document}
\theoremstyle{plain}
\newtheorem{thm}{Theorem}
\newtheorem{lem}[thm]{Lemma}
\newtheorem{cor}[thm]{Corollary}
\newtheorem{prop}[thm]{Proposition}
\title{Geometric preferential attachment in non-uniform metric spaces}

\author{Jonathan Jordan\\
 University of Sheffield}
\maketitle
\begin{abstract}We investigate the degree sequences of geometric preferential attachment graphs in general compact metric spaces.  We show that, under certain conditions on the attractiveness function, the behaviour of the degree sequence is similar to that of the preferential attachment with multiplicative fitness models investigated by Borgs et al.  When the metric space is finite, the degree distribution at each point of the space converges to a degree distribution which is an asymptotic power law whose index depends on the chosen point.  For infinite metric spaces, we can show that for vertices in a Borel subset of $S$ of positive measure the degree distribution converges to a distribution whose tail is close to that of a power law whose index again depends on the set.  \\ AMS 2010 Subject Classification: Primary 05C82, Secondary 60D05. \\ Key words and phrases:geometric random graphs; preferential
attachment.\end{abstract}

\section{Introduction}

In \cite{FFV1} and \cite{FFV2}, Flaxman, Frieze and Vera
introduced a model for a growing graph driven by geometric
preferential attachment. In this model, which is a variant of the
Barab\'{a}si-Albert preferential attachment model introduced in
\cite{scalefreedefs} and analysed in \cite{sfdiameter, brst},
vertices are given a random location in a metric space $S$ and the probability that a new vertex is connected
to an already existing vertex $u$ depends on the distance between
them in space as well as on the degree of $u$.  The aim is to combine the ideas of the Barab\'{a}si-Albert model with some of those found in spatial graph models, for example random geometric graphs, \cite{penrosergg}, online models such as the FKP model, \cite{bbbcr}, and its special case the online nearest neighbour graph, \cite{wadeong}, which is also a degenerate case of geometric preferential attachment.

In \cite{geopref4}, under a rather strong condition on the probability measure determining the locations of the vertices
and the strength of the effect of distance on the connection
probabilities, it was shown that the limiting proportion of vertices with degree $d$ was the same as that found for the Barab\'{a}si-Albert model in \cite{brst}.

In this paper we consider one of the questions which was left open in \cite{geopref4}, namely what it is possible to say if we weaken the assumptions on the  probability measure $\mu$ determining the locations of the vertices.  In \cite{geopref4} it was required that for any fixed $r$, $\mu(B_r(x))$ is constant as a function of $x$, where $B_r(x)$ is the open ball (in the underlying metric on $S$) of radius $r$
centred on $x$, for example the case of Haar measure on a compact group with an invariant metric.  We will show that if this assumption is weakened then (still assuming certain conditions on the the strength of the effect of distance on the connection
probabilities and still assuming that the space is compact) the behaviour of the degree distribution of the model is similar to that found for preferential attachment with multiplicative fitness, as investigated by Borgs et al in \cite{fitness}.  In that paper each vertex has a random fitness, and the probability of a new vertex connecting to an existing vertex is proportional to the product of its degree and its fitness.

In addition we generalise the model of \cite{geopref4} so that the attractiveness of a vertex at location $x$ to one at location $y$ is not necessarily the same as the attractiveness of a vertex at location $y$ to one at location $x$.  We do this by replacing the attractiveness $F(\rho(x,y))$ (where $\rho$ is the metric on $S$) by a function of two variables $\alpha(x,y)$.  This also allows the preferential attachment with fitness model of \cite{fitness} to be seen as a special case of our model, by allowing $\alpha(x,y)$ to only depend on $x$.

\section{Our model and results}\label{model}

We assume $S$ is a compact
metric space with metric $\rho$ and probability measure $\mu$; the
locations of the added vertices will be assumed to be independent
random variables $(X_n)_{n\in\N}$ with law $\mu$.  We denote the Borel $\sigma$-algebra of $S$ by $\mathcal{B}(S)$.

Let $\alpha:S\times S\to\R^+$ be an attractiveness function; we will usually assume
that $\alpha$ is continuous.  The interpretation of $\alpha(x,y)$ here is that it is the attractiveness of a vertex at $x$ to a new vertex at $y$.  Note that the situation in \cite{geopref4}, where the attractiveness was defined as $F(\rho(x,y))$ with $F$ being a function from $\R^+\to\R^+$, can be treated using the formulation in this paper by letting $\alpha(x,y)=F(\rho(x,y))$.  Also note that the preferential attachment with fitness model of \cite{fitness} can be seen as a special case of this model with $S$ being the set of possible fitnesses, the ``location'' of a vertex being its fitness, and $\alpha(x,y)=x$ for all $x,y\in S$.

Let $m\in\N$ be the number of vertices that each new vertex
will be connected to when it is added to the graph, which as in other papers, such as \cite{scalefreedefs, sfdiameter, geopref4}, will be a parameter of the model.

In \cite{geopref4} it was assumed that the metric space $S$ and measure $\mu$ satisfied the condition that, for any fixed $r$, $\mu(B_r(x))$ is constant as a function
of $x$, where $B_r(x)$ is the open $\rho$-ball of radius $r$
centred on $x$, a rather strict ``uniformity'' condition.  The aim here is to discuss what happens when this assumption does not hold.

To start the process, we let $G_0$ be a connected graph with $n_0$
vertices and $e_0$ edges, and we give each vertex $v\in V(G_0)$ a location $X_v\in S$. Then, to form $G_{n+1}$ from $G_n$, we add a new vertex $v_{n+1}$ to the graph whose location $X_{n+1}=X_{v_{n+1}}$ is a random variable on $S$ with law $\mu$ independent of $X_1,X_2,\ldots,X_n$ and the structure of $G_n$.  Let
$\hier[V]{n+1}{\iota}$, $1\leq \iota\leq m$ be the random variables
representing the $m$ vertices chosen to be neighbours of the new
vertex $v_{n+1}$ at time $n+1$. Conditional on $X_{n+1}$ and $\salj_n$,
where $\salj_n$ is the $\sigma$-algebra generated by the graphs
$G_0,G_1,\ldots,G_n$ and the location in space of their vertices,
we let the $\hier[V]{n+1}{\iota}$ be chosen independently such that for $v\in V(G_n)$
the probability that $\hier[V]{n+1}{\iota}=v$ is
$$\frac{\deg_{G_n}(v)\alpha(X_v,X_{n+1})}{D_n(X_{n+1})},$$ where
$\deg_{G}(v)$ is the degree of the vertex $v$ in the graph $G$ and
$$D_n(x)=\sum_{u\in V(G_n)}\deg_{G_n}(u)\alpha(X_u,x).$$  Note that we
allow that $\hier[V]{n+1}{\iota_1}=\hier[V]{n+1}{\iota_2}$ for some $\iota_1 \neq
\iota_2$, in which case multiple edges will form, but we do not allow loops.

Following \cite{geopref4}, we define a sequence of (random) measures $\delta_n$ on $S$ by, for $A\in \mathcal{B}(S)$, $$\delta_n(A)=\frac{1}{2(mn+e_0)}\sum_{v\in A\cap
V(G_n)}\deg_{G_n}(v),$$ so that $\delta_n(A)$ is the total degree in $A$, normalised to give a probability measure on $S$. Then
$$\frac{D_n(x)}{2(mn+e_0)}=\int_S \alpha(x,y)\;d\delta_n(x).$$  Thus the probability that $\hier[V]{n+1}{i}=v$ given $G_n$ and $X_{n+1}=x$ can be rewritten as
$$\frac{\deg_{G_n}(v)}{2(mn+e_0)}\frac{\alpha(X_v,x)}{\int_S \alpha(x,y)\;d\delta_n(x)},$$ and so the probability that $\hier[V]{n+1}{i}=v$ given $G_n$ is \begin{equation}\label{choosek}\frac{\deg_{G_n}(v)}{2(mn+e_0)}\int_S\frac{ \alpha(X_v,x)}{\int_S \alpha(x,y)\;d\delta_n(x)}\;d\mu(x).\end{equation}

Given the uniformity and symmetry assumptions in \cite{geopref4} $\delta_n$ converges weakly to $\mu$, and furthermore $\int_S \alpha(x,y)\;d\mu(x)$ does not depend on $x$.  This can be used to show that the integral in \eqref{choosek} tends to $1$, so for large $n$ the probability of choosing $v_k$ is close to what it would be in standard preferential attachment.

Our aim is to show that if the uniformity assumption does not hold, then under certain conditions $\delta_n$ converges weakly to some measure $\nu$ on $S$.  If this holds then for large $n$ then $\pr(\hier[V]{n+1}{i}=v)$ will be approximately $$\frac{\deg_{G_n}(v)}{2(mn+e_0)}\int_S\frac{ \alpha(X_v,y)}{\int_S \alpha(x,y)\;d\nu(x)}\;d\mu(y).$$  We have the following theorem.

\begin{thm}\label{overall} Assume that $\alpha$ is a continuous function from $S \times S$ to $\R^{+}$, that $\log \alpha$ is Lipschitz in both components, and that there exists $\alpha_0>0$ such that $\alpha(x,y)\geq \alpha_0$ for all $x,y \in S$.  Then we have the following.\begin{enumerate}
\item There exists a probability measure $\nu$ on $S$ such that $\delta_n$ converges weakly to $\nu$.
\item Define, for $u\in S$, $$\phi(u)=\int_S\frac{ \alpha(u,y)}{\int_S \alpha(x,y)\;d\nu(x)}\;d\mu(y);$$ then $$\pr(\hier[V]{n+1}{i}=v)\frac{2(mn+e_0)}{\deg_{G_n}(v)}=\phi(X_v)+o(1),$$\end{enumerate}\end{thm}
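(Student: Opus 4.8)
Interpreting $\pr$ as conditional on $\salj_n$ (as in \eqref{choosek}), and writing the new‑vertex location as integration variable $y\sim\mu$ and the existing‑vertex location as $z\sim\delta_n$, equation \eqref{choosek} reads
$$\pr(\hier[V]{n+1}{i}=v\mid\salj_n)\frac{2(mn+e_0)}{\deg_{G_n}(v)}=\phi_n(X_v),\qquad \phi_n(u)=\int_S\frac{\alpha(u,y)}{\int_S\alpha(z,y)\,d\delta_n(z)}\,d\mu(y),$$
and $\phi$ is $\phi_n$ with $\delta_n$ replaced by $\nu$. So part (2) reduces to $\phi_n\to\phi$ uniformly on $S$, which is soft given part (1): writing $\alpha_1:=\max\alpha<\infty$, the family $\{\alpha(\cdot,y):y\in S\}$ is uniformly bounded and equicontinuous (as $\alpha$ is uniformly continuous on the compact $S\times S$), so weak convergence $\delta_n\to\nu$ forces $\int_S\alpha(z,y)\,d\delta_n(z)\to\int_S\alpha(z,y)\,d\nu(z)$ uniformly in $y$. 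Since each denominator is $\ge\alpha_0>0$, the integrands converge uniformly and are dominated by $\alpha_1/\alpha_0$, giving $\phi_n\to\phi$ pointwise; equicontinuity of $(\phi_n)$ and Arzel\`a--Ascoli then upgrade this to uniform convergence. I therefore concentrate on part (1).

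\textbf{The stochastic approximation.} For continuous $f$ put $\delta_n(f)=\int_S f\,d\delta_n$ and $\alpha_y:=\alpha(\cdot,y)$. Adding $v_{n+1}$ raises the total $f$‑weighted degree by $mf(X_{n+1})$ from the new vertex plus $f$ at each of the $m$ chosen neighbours; conditioning on $\salj_n$, averaging $X_{n+1}$ over $\mu$, and renormalising gives
$$\E\big[\delta_{n+1}(f)-\delta_n(f)\mid\salj_n\big]=\frac{m}{m(n+1)+e_0}\big(T(\delta_n)(f)-\delta_n(f)\big),\qquad T(\lambda)(f)=\tfrac12\int_S f\,d\mu+\tfrac12\int_S\frac{\int_S f\,\alpha_y\,d\lambda}{\int_S\alpha_y\,d\lambda}\,d\mu(y).$$
Thus $(\delta_n)$ is a measure‑valued Robbins--Monro scheme with steps $\gamma_{n+1}=\tfrac{m}{m(n+1)+e_0}\asymp 1/n$ and mean field $T(\lambda)-\lambda$; writing $\lambda_y$ for the measure with density $\alpha_y/\int\alpha_y\,d\lambda$ against $\lambda$, one has $T(\lambda)=\tfrac12\mu+\tfrac12\int_S\lambda_y\,d\mu(y)$, and any weak limit of $\delta_n$ must solve the fixed‑point equation $\nu=T(\nu)$.

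\textbf{Existence, uniqueness and stability of $\nu$.} Since $S$ is compact, $P(S)$ is convex and weak‑$*$ compact, and $T:P(S)\to P(S)$ (indeed $T(\lambda)(1)=1$ and $T(\lambda)$ is positive); the lower bound $\alpha\ge\alpha_0$ keeps every denominator $\ge\alpha_0$, making $T$ weak‑$*$ continuous, so Schauder--Tychonoff supplies a fixed point $\nu$. The delicate point, and the main obstacle, is uniqueness together with global attraction. A naive total‑variation estimate only yields $\|T(\lambda_1)-T(\lambda_2)\|_{\mathrm{TV}}\le(\alpha_1/\alpha_0)\|\lambda_1-\lambda_2\|_{\mathrm{TV}}$, which need not contract. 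Instead I would exploit that each tilt $\lambda\mapsto\lambda_y$ has bounded projective oscillation: because $\alpha_0\le\alpha\le\alpha_1$ and $\log\alpha$ is Lipschitz, the cross ratio $\sup\log\frac{\alpha(x,y)\alpha(x',y')}{\alpha(x,y')\alpha(x',y)}$ is finite, so by Birkhoff's theorem each tilt contracts the Hilbert metric by a factor $<1$; the averaging against $\mu$ and the minorisation $T(\lambda)\ge\tfrac12\mu$ then provide enough uniform mixing to make $T$ a strict contraction in a complete metric on $P(S)$ compatible with weak convergence (equivalently, to build a strict Lyapunov function for $\dot\delta=T(\delta)-\delta$ with $\nu$ as its unique, globally attracting rest point). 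This is precisely where the hypotheses on $\log\alpha$ are used.

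\textbf{Convergence.} Writing the recursion as $\delta_{n+1}(f)-\delta_n(f)=\gamma_{n+1}\big(T(\delta_n)(f)-\delta_n(f)+U_{n+1}(f)\big)$, the noise $U_{n+1}(f)$ is a bounded martingale difference for bounded $f$ (the raw increment is $O(\gamma_n)$), and $\sum_n\gamma_n^2<\infty$. With $\nu$ a unique globally attracting fixed point, Bena\"{\i}m's theorem identifies the almost‑sure limit set of $(\delta_n)$ as an internally chain transitive set of the flow $\dot\delta=T(\delta)-\delta$, which can only be $\{\nu\}$; hence $\delta_n(f)\to\nu(f)$ almost surely. Applying this along a countable weak‑$*$ determining family of continuous functions gives $\delta_n\to\nu$ weakly, almost surely, proving part (1) and, via the first paragraph, part (2).
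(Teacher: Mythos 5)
Your reduction of part (2) to part (1) is fine and matches what the paper does implicitly, and your identification of the mean field $T(\lambda)=\tfrac12\mu+\tfrac12\int_S\lambda_y\,d\mu(y)$ is the right object. The problem is the paragraph on uniqueness and global attraction, which is exactly the crux of the theorem and where your argument does not go through as described. The tilt $\lambda\mapsto\lambda_y=\alpha_y\lambda/\lambda(\alpha_y)$ is, projectively, a \emph{multiplication} operator, not an integral operator with a positive kernel: it leaves the density ratio $d\lambda/d\lambda'$ pointwise unchanged up to a constant, so it is an \emph{isometry} of the Hilbert projective metric, not a strict contraction. Birkhoff's theorem gives a contraction factor $\tanh(\Delta/4)<1$ only when the image of the cone has finite projective diameter $\Delta$, which fails here; moreover $T$ is nonlinear, and $T(\lambda)$, $T(\lambda')$ need not even be mutually absolutely continuous (e.g.\ if $\lambda$ has an atom off the support of $\mu$), so their Hilbert distance is infinite and the minorisation by $\tfrac12\mu$ does not repair this. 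The parenthetical ``equivalently, build a strict Lyapunov function for $\dot\delta=T(\delta)-\delta$'' is precisely the missing content: on an infinite $S$ no such function is exhibited, and invoking Bena\"{\i}m's limit-set theorem for a measure-valued Robbins--Monro scheme also requires an asymptotic-pseudotrajectory argument in $P(S)$ that you have not supplied.

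The paper avoids both difficulties by never working directly in $P(S)$. For finite $S$ it writes down an explicit convex Lyapunov function $V(y)=\sum_j y_j-\tfrac12\sum_j\mu_j(\log y_j+\log\sum_k y_ka_{k,j})$ on the simplex, checks $G_i(y)=-y_i\,\partial_{y_i}V(y)$, and applies Pemantle's Proposition~2.18; convexity plus blow-up at the boundary gives the unique global minimum $\nu$ for free. For infinite $S$ it partitions $S$ into sets of diameter $\le\epsilon$, couples $(G_n)$ to a finite-state process with an extra ``rejection'' location $0$ and acceptance probability $\gamma_{\cals}\ge e^{-2K\epsilon}$ (this is where the Lipschitz hypothesis on $\log\alpha$ enters, quite differently from your cross-ratio bound), bounds the limiting mass at $0$ by $\frac{(1-\gamma_{\cals})(1+t)}{2t}$, and lets $\epsilon\to0$ to sandwich $\delta_n(A)$ for Borel $A$ and extract the weak limit. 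If you want to salvage your direct approach you would need to actually construct the infinite-dimensional analogue of $V$ (formally $V(\lambda)=\lambda(S)-\tfrac12\int_S\log\frac{d\lambda}{d\mu}\,d\mu-\tfrac12\int_S\log\lambda(\alpha_y)\,d\mu(y)$) and justify its use as a strict Lyapunov function on $P(S)$, which is a substantial piece of work; as written, part (1) is not proved.
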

Note that, unlike in \cite{geopref4}, we do not allow $\alpha(x,y)\to\infty$ as $\rho(x,y)\to 0$ here.  In the case where $S$ is a finite metric space, we do not require the conditions on $\alpha$; in that case we can simply assume that $\alpha$ is a function from $\R^{+}\cup \{0\}$ to $\R^{+}\cup \{0\}$.

That $\pr(\hier[V]{n+1}{i}=v)$ is approximately proportional to a constant $\phi(X_v)$ times the degree of $v$ is reminiscent of the preferential attachment with fitness model of Borgs et al in \cite{fitness}.  This suggests that in our model the quantity $\phi(X_v)$, which depends on the location of the vertex $v$, might play a similar role to the fitness in the model of \cite{fitness}.  Indeed, in \cite{fitness} it is shown that vertices of different fitnesses have degree distributions following approximately power law distributions where the index of the power law depends on the fitness, and we will show similar results here, starting with the case where $S$ is a finite metric space.

\begin{thm}\label{finitedd}
Assume that the metric space $S$ consists of a finite number of points $z_1,z_2,\ldots,z_N$, and that $\alpha(x,y)$ is a function from $S\times S$ to $\R^{+}\cup \{0\}$.  Let $\mu_i=\mu(\{z_i\})$, and let $\phi_i=\phi(z_i)$ where $\phi$ is as defined in Theorem \ref{overall}.  Then if we let $\hier[p]{n}{d,i}$ be the proportion of vertices of $G_n$ which are located at $z_i$ and have degree $d$, we have
$$\hier[p]{n}{d,i} \to \frac{2\mu_i}{\phi_i}\frac{\Gamma(m+2\phi_i^{-1})\Gamma(d)}{
\Gamma(m)\Gamma(d+2\phi_i^{-1}+1)},$$ almost surely, as $n\to\infty$.
\end{thm}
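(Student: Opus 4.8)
The plan is to track the counts $N^{(n)}_{d,i}=\#\{v\in V(G_n):X_v=z_i,\ \deg_{G_n}(v)=d\}$ and to show $N^{(n)}_{d,i}/n\to p_{d,i}$ almost surely for each fixed $d\ge m$, where $p_{d,i}$ denotes the claimed limit; since $G_n$ has $n_0+n$ vertices with $n_0$ fixed, $\hier[p]{n}{d,i}=N^{(n)}_{d,i}/(n_0+n)$ has the same limit. The $n_0$ initial vertices, and every vertex of degree below $m$ (which can only be an initial vertex, as new vertices arrive with degree exactly $m$ and degrees never decrease), contribute $O(1)$ to all the counts, so $N^{(n)}_{d,i}/n\to0$ for $d<m$ and it suffices to treat $d\ge m$. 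By \eqref{choosek} and part 2 of Theorem \ref{overall}, conditionally on $\salj_n$ each of the $m$ independent choices of $v_{n+1}$ picks a given vertex $v$ at $z_i$ with probability $\deg_{G_n}(v)\,\phi_i^{(n)}/(2(mn+e_0))$, where $\phi_i^{(n)}=\int_S\frac{\alpha(z_i,y)}{\int_S\alpha(x,y)\,d\delta_n(x)}\,d\mu(y)\to\phi_i$ almost surely by part 1.

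Next I would write down the one-step dynamics of $N^{(n)}_{d,i}$. The new vertex contributes $+1$ to class $(m,i)$ exactly when it lands at $z_i$, of probability $\mu_i$; an existing vertex at $z_i$ of degree $d-1$ is promoted into class $(d,i)$ iff it is chosen exactly once among the $m$ picks, and one of degree $d$ leaves iff it is chosen at least once. For fixed $d$ the probability that a given vertex is chosen twice or more is $O(n^{-2})$, and summing over the $O(n)$ vertices contributes only $O(n^{-1})$. Collecting the terms gives the exact recursion
$$\E[N^{(n+1)}_{d,i}\mid\salj_n]=\Bigl(1-\tfrac{b_n}{n}\Bigr)N^{(n)}_{d,i}+(d-1)\lambda_n\,N^{(n)}_{d-1,i}+\mu_i\mathbf 1_{\{d=m\}}+O(n^{-1}),$$
where $\lambda_n=m\phi_i^{(n)}/(2(mn+e_0))$ and $b_n=d\,n\lambda_n\to d\phi_i/2$, so that $n\lambda_n\to\phi_i/2$. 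Setting $\xi_{n+1}=N^{(n+1)}_{d,i}-\E[N^{(n+1)}_{d,i}\mid\salj_n]$, this reads $N^{(n+1)}_{d,i}=(1-b_n/n)N^{(n)}_{d,i}+c_n+\xi_{n+1}$, where $(\xi_{n+1})$ is a martingale difference sequence with $|\xi_{n+1}|\le 2(m+1)$ (a single step alters $N_{d,i}$ by at most $m+1$), and, under the inductive hypothesis $N^{(n)}_{d-1,i}/n\to p_{d-1,i}$ together with $\phi_i^{(n)}\to\phi_i$, one has $c_n\to c:=\tfrac{(d-1)\phi_i}{2}p_{d-1,i}+\mu_i\mathbf 1_{\{d=m\}}$ almost surely.

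The convergence then follows from a standard fact about stochastic recursions of this shape: if $a_{n+1}=(1-b_n/n)a_n+c_n+\xi_{n+1}$ with $b_n\to b>0$ and $c_n\to c$ almost surely and $(\xi_{n+1})$ a bounded martingale difference sequence, then $a_n/n\to c/(1+b)$ almost surely. One sees this by centring $u_n=a_n-\tfrac{c}{1+b}n$, giving $u_{n+1}=(1-b_n/n)u_n+\varepsilon_n+\xi_{n+1}$ with $\varepsilon_n\to0$; unrolling, the homogeneous factors $\prod_{k<j\le n}(1-b_j/j)\asymp(k/n)^{b}$ damp the forcing so that the deterministic part is $o(n)$, while the martingale part accumulates variance $O(n)$ and is hence $o(n)$ almost surely by Azuma--Hoeffding and Borel--Cantelli (the technique being that of \cite{brst,geopref4}). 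Applying this to the $b_n,c_n,\xi_{n+1}$ above and inducting on $d$ yields, almost surely,
$$p_{m,i}=\frac{\mu_i}{1+m\phi_i/2}=\frac{2\mu_i\phi_i^{-1}}{m+2\phi_i^{-1}},\qquad p_{d,i}=\frac{(d-1)\phi_i/2}{1+d\phi_i/2}\,p_{d-1,i}=\frac{d-1}{d+2\phi_i^{-1}}\,p_{d-1,i}\quad(d>m).$$

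It remains to solve this recursion, which is routine: iterating gives $p_{d,i}=p_{m,i}\prod_{k=m+1}^{d}\frac{k-1}{k+2\phi_i^{-1}}$, and since $\prod_{k=m+1}^{d}(k-1)=\Gamma(d)/\Gamma(m)$ and $\prod_{k=m+1}^{d}(k+2\phi_i^{-1})=\Gamma(d+2\phi_i^{-1}+1)/\Gamma(m+2\phi_i^{-1}+1)$, the factor $m+2\phi_i^{-1}$ in $p_{m,i}$ cancels against $\Gamma(m+2\phi_i^{-1}+1)=(m+2\phi_i^{-1})\Gamma(m+2\phi_i^{-1})$ to leave exactly $\frac{2\mu_i}{\phi_i}\frac{\Gamma(m+2\phi_i^{-1})\Gamma(d)}{\Gamma(m)\Gamma(d+2\phi_i^{-1}+1)}$, as claimed. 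The real work is the almost-sure step rather than this algebra: the crux is that the attachment coefficients $\phi_i^{(n)}$ are themselves random and converge to $\phi_i$ only in the limit, so one must argue pathwise on the full-measure event $\{\phi_i^{(n)}\to\phi_i\}$, where $b_n,c_n$ genuinely converge, and control the fluctuations through the bounded-difference martingale estimate; passing instead to full expectations would entangle $N^{(n)}_{d,i}$ with $\phi_i^{(n)}$ and destroy the clean linear recursion.
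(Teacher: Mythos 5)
Your proposal is correct and follows the same skeleton as the paper's proof: the same one-step recursion for the degree-$d$ counts at $z_i$ (the new vertex feeds class $(m,i)$ with probability $\mu_i$, promotion from $d-1$ to $d$ at rate $\approx(d-1)\phi_i/2$ per unit of $1/n$, departure at rate $\approx d\phi_i/2$, multiple-edge events contributing $O(n^{-1})$), the same induction on $d$, and the same fixed-point algebra $p_{m,i}=2\mu_i/(2+m\phi_i)$ and $p_{d,i}=\frac{d-1}{d+2\phi_i^{-1}}p_{d-1,i}$ followed by Gamma-function telescoping. Where you genuinely differ is the convergence machinery. The paper proves a small stochastic-approximation lemma (Lemma \ref{convlemma}, adapted from Lemma 2.6 and Corollary 2.7 of \cite{pemantlesurvey}): for $B_{n+1}-B_n=\frac1n(A_n-kB_n+\xi_n)+R_{n+1}$ with $A_n\to a$ almost surely, it shows $B_n$ cannot visit $[0,\frac{a}{k}-\eta]$ or $[\frac{a}{k}+\eta,\infty)$ infinitely often, which absorbs the randomness of the drift for free. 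You instead unroll the affine recursion and kill the martingale part with Azuma--Hoeffding and Borel--Cantelli, in the style of \cite{brst,geopref4}. Both routes work, but note one step of yours that needs to be written out rather than waved at: because the coefficients $b_j$ depend on $\phi_i^{(j)}$, the damping factors $\prod_{k<j\le n}(1-b_j/j)$ involve $b_j$ for $j>k$ and are not $\salj_k$-measurable, so Azuma--Hoeffding does not apply verbatim to the weighted sum $\sum_k\xi_{k+1}\prod_{k<j\le n}(1-b_j/j)$; you should either sandwich $b_j$ between $b\pm\epsilon$ for $j$ large on the almost-sure event $\{\phi_i^{(n)}\to\phi_i\}$ and compare with two deterministic recursions, or apply Azuma to the unweighted martingale $\sum_k\xi_k$ and sum by parts. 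You correctly identify this randomness of the coefficients as the crux; the paper's Pemantle-style lemma is precisely the device that makes it painless. Everything else (the $O(1)$ treatment of initial vertices, the $O(n^{-1})$ multiple-edge bound, the final algebra) matches the paper.
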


In a similar way to in \cite{fitness} this gives an asymptotic (in $d$) power law for vertices at the same location.  As in \cite{fitness} the tail index of the power law depends on $\phi_i$, and here is $2\phi_i^{-1}$.   Indeed we can think of $\phi_i$ as giving a ``geometrical fitness'' for a point at $z_i$, thought of as a multiplicative fitness in the sense of \cite{fitness}.

We can also obtain a result on the degree sequence in the case where $S$ is infinite.

\begin{thm}\label{infinitedd}
Let $A\in\mathcal{B}(S)$ be a Borel set with $\mu(A)>0$.  Let $\hier[p]{n}{d,A}$ be the proportion of vertices in $G_n$ which are of degree $d$ and have locations in $A$.  Let $\phi_A=\sup_{x\in A}\phi(x)$ and $\psi_A=\inf_{x\in A}\phi(x)$.

Then, almost surely, $$\liminf_{n\to\infty} \frac{1}{\mu(A)}\sum_{k=m}^{d} \hier[p]{n}{k,A}\geq \frac{2}{\phi}\frac{\Gamma(m+2\phi^{-1})}{\Gamma(m)}\sum_{k=m}^{d} \frac{\Gamma(k)}{
\Gamma(k+2\phi^{-1}+1)},$$ for any $\phi>\phi_A$, and
$$\limsup_{n\to\infty} \frac{1}{\mu(A)}\sum_{k=m}^{d} \hier[p]{n}{k,A}\leq \frac{2}{\phi}\frac{\Gamma(m+2\phi^{-1})}{\Gamma(m)}\sum_{k=m}^{d} \frac{\Gamma(k)}{
\Gamma(k+2\phi^{-1}+1)},$$ for any $\phi<\psi_A$.
\end{thm}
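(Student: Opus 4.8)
The plan is to track, for each fixed degree $d\geq m$, the number $N_d(n)$ of vertices of $G_n$ that lie in $A$ and have degree exactly $d$, together with the cumulative counts $S_d(n)=\sum_{k=m}^{d}N_k(n)$, so that $\frac{1}{\mu(A)}\sum_{k=m}^{d}\hier[p]{n}{k,A}=\frac{S_d(n)}{\mu(A)(n+n_0)}$ up to the $O(1)$ contribution of the $n_0$ initial vertices, which is negligible in the limit. First I would write down the one-step conditional expectation of $S_d$. Only a vertex of degree exactly $d$ in $A$ can leave the set counted by $S_d$, by acquiring an edge and moving to degree $>d$, while a newly added vertex enters $S_d$ whenever it lands in $A$ (it always starts at degree $m$); every transition strictly inside $\{m,\dots,d\}$ cancels in the cumulative sum. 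Using part (2) of Theorem \ref{overall}, together with the fact (which I would check first, from continuity of $\alpha$, the bound $\alpha\geq\alpha_0>0$, compactness of $S$, and the weak convergence $\delta_n\to\nu$) that the $o(1)$ there is uniform over $v$, this telescopes to
\begin{equation*}\E[S_d(n+1)-S_d(n)\mid\salj_n]=\mu(A)-\frac{md}{2(mn+e_0)}\Phi_d(n)+o(1),\end{equation*}
where $\Phi_d(n)=\sum_{v\in A,\ \deg_{G_n}(v)=d}\phi(X_v)$ and the $o(1)$ is uniform. I would note that the probability that a degree-$d$ vertex acquires two or more of the $m$ edges contributes only $O(1/n)$ in total and is absorbed into the error.

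The point of working with $\Phi_d$ rather than $N_d$ is that, since every $x\in A$ satisfies $\psi_A\leq\phi(x)\leq\phi_A$, the weighted count can be sandwiched by $N_d$. Fix $\phi<\psi_A$; then $\Phi_d(n)\geq\psi_A N_d(n)\geq\phi N_d(n)=\phi\,(S_d(n)-S_{d-1}(n))$, so taking expectations and writing $e_d(n)=\E[S_d(n)]$ gives the closed deterministic inequality
\begin{equation*}e_d(n+1)-e_d(n)\leq\mu(A)-\frac{md\phi}{2(mn+e_0)}\bigl(e_d(n)-e_{d-1}(n)\bigr)+o(1).\end{equation*}
I would then prove $\limsup_n e_d(n)/(n+n_0)\leq\mu(A)P_d(\phi)$ by induction on $d$, where $P_d(\phi)=\frac{2}{\phi}\frac{\Gamma(m+2\phi^{-1})}{\Gamma(m)}\sum_{k=m}^{d}\frac{\Gamma(k)}{\Gamma(k+2\phi^{-1}+1)}$ is the target cumulative distribution. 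The base case $d=m$ uses $S_{m-1}\equiv0$ and the elementary recursion lemma that $x_{n+1}\leq(1-\frac{c}{n})x_n+b+o(1)$ with $c>0$ forces $\limsup_n x_n/n\leq b/(1+c)$; the inductive step feeds the bound on $e_{d-1}$ into the same lemma, and the arithmetic closes because $P_d(\phi)$ satisfies the identity $(d\phi/2)\,p_d(\phi)=1-P_d(\phi)$, equivalently $P_d(\phi)(1+\tfrac{d\phi}{2})=1+\tfrac{d\phi}{2}P_{d-1}(\phi)$, which I would derive directly from the product formula for the per-location degree proportions of Theorem \ref{finitedd}. The symmetric argument for $\phi>\phi_A$, using $\Phi_d(n)\leq\phi_A N_d(n)\leq\phi N_d(n)$ and the reversed recursion lemma, yields $\liminf_n e_d(n)/(n+n_0)\geq\mu(A)P_d(\phi)$.

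Finally I would pass from these bounds on the expectations to the almost sure statements. Since adding one vertex changes $S_d$ by at most $m+1$, the Azuma--Hoeffding inequality gives $\pr(|S_d(n)-e_d(n)|>\varepsilon n)\leq 2\exp(-c\varepsilon^2 n)$, so summability and Borel--Cantelli give $\frac{1}{n}(S_d(n)-e_d(n))\to0$ almost surely; hence $S_d(n)/(n+n_0)$ inherits the $\limsup$ and $\liminf$ bounds proved for $e_d(n)/(n+n_0)$, and dividing by $\mu(A)$ produces the two displayed inequalities. The main obstacle I anticipate is not any single step but the bookkeeping of the error terms: establishing that the $o(1)$ of Theorem \ref{overall} is genuinely uniform over the random, growing vertex set in $A$, and arranging the induction over $d$ so that the accumulated $\varepsilon$-losses remain controlled. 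This is precisely where the strict inequalities $\phi<\psi_A$ and $\phi>\phi_A$ are used, to absorb simultaneously the uniform error and the spread of $\phi$ across $A$, and it is also the reason the conclusion is only a two-sided bound rather than an exact limit as in Theorem \ref{finitedd}.
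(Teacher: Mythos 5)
Your argument is correct in outline but takes a genuinely different route from the paper. The paper proves this theorem by constructing an auxiliary process $(\tilde G_n)$ in which every vertex $v$ located in $A$ is chosen with probability exactly $\phi\,\deg_{\tilde G_n}(v)/2(mn+e_0)$, coupling it to $(G_n)$ so that the degrees of vertices in $A$ are monotonically ordered between the two processes, and then running the finite-case machinery (the argument of Theorem \ref{finitedd}, via Lemma \ref{convlemma}) on the auxiliary process; the price is the technical restriction $\mu(A)<\tfrac{2}{\phi}-1$ (removed by partitioning $A$) and a monotone coupling whose construction the paper only sketches. You instead work directly with the cumulative counts $S_d(n)$: internal degree transitions cancel, so the drift of $S_d$ sees only arrivals (rate $\mu(A)$) and departures from degree $d$ (weighted by $\Phi_d(n)=\sum\phi(X_v)$), which you sandwich by $\phi N_d(n)$ using $\psi_A\leq\phi(x)\leq\phi_A$ on $A$, and you close a deterministic recursion for the expectations by induction on $d$ via the identity $P_d(\phi)(1+\tfrac{d\phi}{2})=1+\tfrac{d\phi}{2}P_{d-1}(\phi)$ (which is correct; it is equivalent to $\tfrac{d\phi}{2}p_d=1-P_d$ and follows from the product formula in Theorem \ref{finitedd}). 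This avoids both the coupling and the restriction on $\mu(A)$, at the cost of redoing by hand the telescoping that the paper inherits from the finite case. Both routes rest on the same two facts: a version of Theorem \ref{overall}(2) uniform over the vertices of $A$, and the monotone propagation of a one-sided fitness bound through the degree recursion.

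Two steps in your write-up are too quick, though neither is fatal. First, the concentration step: $S_d(n)-\E[S_d(n)]$ is not a martingale, so the observation that one step changes $S_d$ by at most $m+1$ does not by itself license Azuma--Hoeffding. You need bounded differences for the Doob martingale $M_k=\E[S_d(n)\mid\salj_k]$, i.e. that revealing one step's randomness perturbs the conditional expectation of the terminal count by $O(1)$; this is true but requires a coupling argument in the style of Bollob\'as--Riordan--Spencer--Tusn\'ady. A cleaner alternative, closer to the paper's toolkit, is to apply a one-sided variant of Lemma \ref{convlemma} directly to $S_d(n)/(n+n_0)$ --- its proof already treats upward and downward excursions separately, and it delivers the almost sure bounds from the conditional drift inequality without any concentration inequality or detour through expectations. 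Second, the $o(1)$ in Theorem \ref{overall}(2) is a realization-dependent quantity (it depends on how fast $\delta_n$ converges); when you take expectations you should note that the error is uniformly bounded (because $\alpha$ is bounded above and below away from zero on the compact $S$), so dominated convergence converts it into a deterministic $o(1)$, and that the uniformity over $v$ follows from the Lipschitz hypothesis on $\log\alpha$ together with the weak convergence $\delta_n\to\nu$. With those two points filled in, your proof stands as a valid and somewhat more self-contained alternative to the paper's coupling argument.
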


This shows that for vertices in $A$ the limiting tail behaviour of the degree sequence is between those of power laws with tail indices $2\psi_A^{-1}$ and $2\phi_A^{-1}$.

To prove the above theorems, we will start off by considering, in section \ref{finite}, the case where $S$ is a finite metric space, where stochastic approximation techniques can be used to show the convergence of the measures (which in the finite case are points in a simplex).  In section \ref{infinite}, we will then use a coupling between geometric preferential attachment on $S$ and a process closely related to the finite space case to show that convergence of measures also applies in the infinite case, and deduce Theorem \ref{infinitedd}.  The arguments are to some extent based on those in \cite{fitness} but use more general stochastic approximation techniques.  Before that, we will give some examples where the measure $\nu$ can be found explicitly.

\subsection{Examples}

\subsubsection{Uniform measure}

In \cite{geopref4}, it was assumed that for any fixed $r$, $\mu(B_r(x))$ is constant as a function of $x$, where $B_r(x)$ is the open ball (in the underlying metric on $S$) of radius $r$ centred on $x$.  This includes for example the case of Haar measure on a compact group with an invariant metric.  It was also assumed that $\alpha(x,y)=F(\rho(x,y))$ for some function $F$.  Under these assumptions, the results in \cite{geopref4} imply that the measure $\nu$ in Theorem \ref{overall} is equal to $\mu$.  As a result the function $\phi$ defined in the statement of Theorem \ref{overall} is $1$ everywhere on $S$, and all subsets of $S$ with positive measure under $\mu$ have the same limiting degree distribution for their vertices.

\subsubsection{Preferential attachment with fitness}

As mentioned above, the preferential attachment with fitness model of \cite{fitness} can be considered as a special case of our model by letting the set $S$ be a subset of $\R^+$, the location of a vertex being equal to its fitness, and taking $\alpha(x,y)=x$, so that the attractiveness of a vertex does not depend on the location of the new vertex but simply on its own location, that is its fitness.  (Note that for Theorem \ref{overall} to apply to this model as stated, we require that the set of fitnesses be bounded away from zero and to be contained within a compact subset of $\R^+$.)

This model is analysed in detail in \cite{fitness}, considering finite, discrete countable and continuous fitness distributions separately.  For example in Theorems 6 and 7 of \cite{fitness} it is assumed that the fitness distribution is defined by a probability density function $g(x)$ on an interval $[0,h]$, and that $g(x)$ is non-zero on $(0,h)$.  It is also assumed that $m=1$.  Under these assumptions they show that, if $M_{n,[a,b]}$ the number of edge endpoints in $G_n$ with fitnesses in $[a,b]$ then for $0\leq a<b<h$ $$\frac{M_{n,[a,b]}}{n}\to \nu_{[a,b]}$$ almost surely as $n\to\infty$, where an explicit formula for $\nu_{[a,b]}$ is given.  For example if there is a solution $\lambda_0\geq h$ to \begin{equation}\label{fgr}\int_0^h \frac{x g(x)}{\lambda_0-x}\;dx\geq 1\end{equation} (this is described in \cite{fitness} as the ``fit get richer'' phase) then $\nu_{[a,b]}$ is defined as $\lambda_0\int_a^b \frac{g(x)}{\lambda_0-x}$.  In our notation, this shows that the measure $\nu$ satisfies $\nu([a,b])=\frac{\nu_{[a,b]}}{2}$.

In what is described in \cite{fitness} as the ``innovation pays off'' phase, where there is no solution to \eqref{fgr}, the results given in Theorem 7 of \cite{fitness} show that the measure $\nu$ has an atom at $h$.  More generally, it is possible that in our setting there may be subsets of $S$ for which the measure $\nu$ is positive but $\mu$ is not, indicating that a proportion of vertices tending to zero have a positive proportion of the edge ends, the innovation pays off phase of \cite{fitness} being the simplest example where this happens.

\subsubsection{A two-point metric space}

We let $S=\{0,1\}$, $\alpha(0,0)=\alpha(1,1)=1$  and $\alpha(0,1)=\alpha(1,0)=a>0$.  We define the measure $\mu$ by $\mu(\{0\})=p$ and $\mu(\{1\})=1-p$, with $0<p<1$.  This is then a special case of the framework in section \ref{finite}, with $N=2$.  In this setting the simplex can be represented as $[0,1]$, and the Lyapunov function which we use in section \ref{finite} can be written as $$V(y)=1-\half\left[p(\log(y)+\log(y+a(1-y)))-(1-p)(\log(1-y)+\log(1-y+ay)\right].$$  Let $y_0$ be the unique root in $(0,1)$ of \begin{equation}\label{N2fp}p\left(\frac{1}{y}+\frac{1-a}{y+a(1-y)}\right)=(1-p)\left(\frac{1}{1-y}+\frac{1-a}{1-y+ay}\right),\end{equation} which gives the location of the minimum of $V(y)$ in $[0,1]$; then Theorem \ref{overall} applies with the measure $\nu$ being defined by $\nu(\{0\})=y_0$ and $\nu(\{1\})=1-y_0$.  We can also calculate $$\phi(0)=\frac{p}{y_0+(1-y_0)a}+\frac{(1-p)a}{1-y_0+y_0a}$$ and $$\phi(1)=\frac{1-p}{1-y_0+y_0a}+\frac{pa}{y_0+(1-y_0)a}.$$

We note that if $y=p$ then \eqref{N2fp} only holds if either $a=1$ (which would be equivalent to standard preferential attachment) or $p=1/2$ (n which case the uniformity assumption of \cite{geopref4} would hold) so if neither of these two conditions hold the measures $\nu$ and $\mu$ are different.

\section{The finite case}\label{finite}

This section will prove Theorem \ref{overall} in the finite case and Theorem \ref{finitedd}.  We assume the metric space $S$ consists of a finite number of points $z_1,z_2,\ldots,z_N$.  Let $\mu_i=\mu(\{z_i\})$, and let $a_{i,j}=\alpha(z_i,z_j)$.

Let $\hier[Y]{n}{i}$ be the number of edge ends at point $z_i$ in $G_n$, i.e. the sum $\sum_{v\in V(G_n),X_v=z_i}\deg_n(v)$.  Then let $\hier[y]{n}{i}=\delta_n(\{z_i\})=\frac{\hier[Y]{n}{i}}{2(mn+e_0)}$, i.e. the proportion of edge ends which are located at $z_i$, and let $\hier[y]{n}{}$ be the point $(\hier[y]{n}{1},\hier[y]{n}{2},\ldots,\hier[y]{n}{N})$ in the $N$-simplex.

Then we can write $$\pr(\hier[V]{n+1}{\iota}=v|X_{n+1}=j) = \frac{\deg_{G_n}(v)a_{X_v,j}}{\sum_{k=1}^N  a_{k,j}\hier[Y]{n}{k}}$$ and so the probability that $\hier[V]{n+1}{\iota}$ is at $z_i$, conditional on the new vertex being at $z_j$, is $$\frac{\hier[Y]{n}{i}a_{i,j}}{\sum_{k=1}^N \hier[Y]{n}{k}a_{k,j}}=\frac{\hier[y]{n}{i}a_{i,j}}{ \sum_{k=1}^N \hier[y]{n}{k} a_{k,j}}.$$

Then $$\E(\hier[Y]{n+1}{i}|\salj_n)=\hier[Y]{n}{i}+ m\mu_i+ m\sum_{j=1}^N \mu_j \frac{\hier[y]{n}{i}a_{i,j}}{ \sum_{k=1}^N \hier[y]{n}{k} a_{k,j}},$$ and so $$\E(\hier[y]{n+1}{i}|\salj_n)=\hier[y]{n}{i}\frac{2(mn+e_0)}{2(m(n+1)+e_0)}+ \frac{m}{2(m(n+1)+e_0)}\mu_i+ \frac{m}{2(m(n+1)+e_0)}\sum_{j=1}^N \mu_j \frac{\hier[y]{n}{i}a_{i,j}}{\sum_{k=1}^N \hier[y]{n}{k}a_{k,j}},$$ giving \begin{eqnarray*} \E(\hier[y]{n+1}{i}|\salj_n)-\hier[y]{n}{i} &=& \frac{2}{2(n+1+e_0/m)}\left(\half\mu_i+\half \sum_{j=1}^N \mu_j \frac{\hier[y]{n}{i}a_{i,j}}{\sum_{k=1}^N \hier[y]{n}{k}a_{k,j}}-\hier[y]{n}{i}\right) \\ &=& \frac{2}{2(n+1+e_0/m)}\left(g_i(\hier[y]{n}{})-\hier[y]{n}{i}\right),\end{eqnarray*} where $g$ is a map from the $N$-simplex to itself given by the $i$ co-ordinate being $$g_i(y)=\half\mu_i+\half \sum_{j=1}^N \mu_j \frac{a_{i,j} y_i}{\sum_{k=1}^N y_k a_{k,j}}.$$  Alternatively $$ \E(\hier[y]{n+1}{i}|\salj_n)-\hier[y]{n}{i} = \frac{2}{2(n+1+e_0/m)}G_i(\hier[y]{n}{i})$$ where $G(y)=g(y)-y$ and so its components are given by $$G_i(y)=\half\mu_i+\half \sum_{j=1}^N \mu_j \frac{a_{i,j} y_i}{\sum_{k=1}^N y_k a_{k,j}}-y_i.$$

\begin{prop}\label{lyapunov}There exists $\nu$ in the $N$-simplex such that as $n\to\infty$ we have $\hier[y]{n}{}\to \nu$, almost surely.
\end{prop}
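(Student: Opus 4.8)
The plan is to read the recursion derived just above as a Robbins--Monro stochastic approximation scheme and to analyse it by the ODE (dynamical systems) method. Writing $\gamma_{n+1}=\frac{1}{n+1+e_0/m}$, the update takes the form $\hier[y]{n+1}{}-\hier[y]{n}{}=\gamma_{n+1}\left(G(\hier[y]{n}{})+U_{n+1}\right)$, where $U_{n+1}=\gamma_{n+1}^{-1}(\hier[y]{n+1}{}-\hier[y]{n}{})-G(\hier[y]{n}{})$ satisfies $\E(U_{n+1}\mid\salj_n)=0$ by construction. Since adding one vertex changes each $\hier[Y]{n}{i}$ by at most $2m$ while the normalisation $2(mn+e_0)$ grows by $2m$, the increments $\hier[y]{n+1}{i}-\hier[y]{n}{i}$ are $O(1/n)$; hence the noise $U_{n+1}$ is uniformly bounded and the step sizes obey $\sum_n\gamma_n=\infty$ and $\sum_n\gamma_n^2<\infty$. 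These are exactly the hypotheses under which the interpolated trajectory is almost surely an asymptotic pseudotrajectory of the flow of $\dot y=G(y)$, so the limit set of $(\hier[y]{n}{})$ is an internally chain transitive set of that flow. The problem is thereby reduced to showing that this flow on the $N$-simplex possesses a globally attracting equilibrium $\nu$, whence $\{\nu\}$ is its only internally chain transitive set.

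To locate such an equilibrium I would use the Lyapunov function
$$W(y)=\half\sum_{i=1}^N\mu_i\log y_i+\half\sum_{j=1}^N\mu_j\log\Big(\sum_{k=1}^N a_{k,j}y_k\Big),$$
which specialises, up to an additive constant, to the function $V$ used for the two-point example in Section~\ref{model}. A direct differentiation gives $y_i\frac{\partial W}{\partial y_i}=g_i(y)$, and summing the identity $\sum_i g_i(y)=1$ yields $\sum_i y_i\frac{\partial W}{\partial y_i}=1$ for every $y$. Setting $f_i(y)=\frac{\partial W}{\partial y_i}$ and using $G_i=g_i-y_i=y_i(f_i-1)$, the derivative of $W$ along the flow is $\frac{\rmd}{\rmd t}W=\sum_i f_i G_i=\sum_i y_i f_i^2-\sum_i y_i f_i=\sum_i y_i f_i^2-1$. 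Since $\sum_i y_i=1$ and $\sum_i y_i f_i=1$, Jensen's (equivalently the Cauchy--Schwarz) inequality gives $\sum_i y_i f_i^2\ge\left(\sum_i y_i f_i\right)^2=1$, so $W$ is non-decreasing along trajectories and strictly increasing except where $f_i$ is constant on the support of $y$, which forces $f_i\equiv1$ there. Thus $-W$ is a strict Lyapunov function whose equilibria are precisely the zeros of $G$.

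Finally I would identify those equilibria. Each summand of $W$ is concave and $\sum_i\mu_i\log y_i$ is strictly concave (as $\mu_i>0$), so $W$ is strictly concave on the simplex with a unique maximiser $\nu$; because $\log y_i\to-\infty$ as $y_i\to0$ this maximiser is interior and, by the Lagrange condition $\nabla W=\lambda\mathbf 1$ together with $\sum_i y_i\frac{\partial W}{\partial y_i}=1$ forcing $\lambda=1$, it satisfies $f_i\equiv1$. Conversely, at any zero of $G$ one has $y_i=g_i(y)\ge\half\mu_i>0$, so every equilibrium is interior and hence solves $f_i\equiv1$; strict concavity then forces it to equal $\nu$. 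Since on each boundary face $\dot y_i=g_i(y)\ge\half\mu_i>0$, the vector field points strictly inward, the interior is forward invariant and the boundary repelling, so LaSalle's invariance principle makes $\nu$ globally asymptotically stable; feeding this into the stochastic approximation theorem gives $\hier[y]{n}{}\to\nu$ almost surely. I expect the main work to lie not in the Lyapunov computation but in the careful verification of the stochastic approximation hypotheses near the boundary --- in particular in checking that the denominators $\sum_k a_{k,j}\hier[y]{n}{k}$ stay bounded away from zero so that $G$ is genuinely Lipschitz along the trajectory, which I would secure by first showing $\liminf_n\hier[y]{n}{i}\ge\mu_i/2>0$ almost surely from the law of large numbers for the number of vertices placed at $z_i$.
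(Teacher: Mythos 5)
Your proposal is correct and follows essentially the same route as the paper: the paper's proof uses the Lyapunov function $V(y)=\sum_j y_j-W(y)$ (so $V=1-W$ on the simplex, and $G_i=-y_i\partial V/\partial y_i$), deduces a unique interior minimum from convexity and blow-up at the boundary, and then cites Proposition 2.18 of \cite{pemantlesurvey}, which is exactly the Lyapunov-function convergence theorem for stochastic approximation whose hypotheses (bounded martingale noise, $\sum\gamma_n^2<\infty$, strict decrease along the flow, a single equilibrium) you verify by hand. The extra detail you supply --- the Cauchy--Schwarz computation of $\dot W$, the identification of the equilibria, and the boundary estimates --- is sound but is subsumed in the cited result.
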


\begin{proof}
For $y$ in the interior of the $N$-simplex, let $$V(y)=1-\half\sum_{j=1}^N\mu_j\left(\log y_j+\log\sum_{k=1}^N y_ka_{k,j}\right).$$
Then (using $1=\sum_{j=1}^N y_j$), $$G_i(y)=-y_i\frac{\partial}{\partial y_i}V(y),$$ and as $y_i>0$ this means that $V$ is a Lyapunov function for $G$. By concavity of the logarithm $V$ is a convex function and it tends to infinity near the boundary of the $N$-simplex, so it has a unique minimum, at a point which we will call $\nu$, in the interior of the $N$-simplex.

Proposition 2.18 of \cite{pemantlesurvey} now gives $\hier[y]{n}{}\to \nu$ a.s. as $n\to\infty$.
\end{proof}
Proposition \ref{lyapunov} corresponds to Proposition 2 in Section 3 of \cite{fitness}.

\begin{prop}\label{phifinite}If vertex $v$ is at location $z_i$ then $$\frac{2(mn+e_0)}{\deg_{G_n}(v)}\pr(\hier[V]{n+1}{\iota}=v)=\phi_i+o(1),$$ where $$\phi_i := \sum_{j=1}^N \mu_j \frac{a_{i,j}}{\sum_{k=1}^N a_{k,j}\nu_k}=2-\frac{\mu_i}{\nu_i}.$$
\end{prop}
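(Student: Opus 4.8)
The plan is to compute the probability $\pr(\hier[V]{n+1}{\iota}=v)$ exactly at finite $n$, express it as a deterministic function of the current configuration $\hier[y]{n}{}$, and then pass to the limit using Proposition \ref{lyapunov}. Starting from the conditional law already recorded, namely that for $v$ at $z_i$ we have $\pr(\hier[V]{n+1}{\iota}=v\mid X_{n+1}=z_j)=\deg_{G_n}(v)a_{i,j}/\sum_{k=1}^N a_{k,j}\hier[Y]{n}{k}$, I would average over the location $X_{n+1}$ of the incoming vertex, which has law $\mu$ and is independent of $\salj_n$, to obtain
$$\pr(\hier[V]{n+1}{\iota}=v)=\sum_{j=1}^N\mu_j\frac{\deg_{G_n}(v)a_{i,j}}{\sum_{k=1}^N a_{k,j}\hier[Y]{n}{k}}.$$
Substituting $\hier[Y]{n}{k}=2(mn+e_0)\hier[y]{n}{k}$ and cancelling the factor $\deg_{G_n}(v)/2(mn+e_0)$ gives the exact identity
$$\frac{2(mn+e_0)}{\deg_{G_n}(v)}\pr(\hier[V]{n+1}{\iota}=v)=\sum_{j=1}^N\mu_j\frac{a_{i,j}}{\sum_{k=1}^N a_{k,j}\hier[y]{n}{k}},$$
valid for every $n$; the right-hand side depends on $v$ only through its location index $i$.

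The second step is to let $n\to\infty$. By Proposition \ref{lyapunov}, $\hier[y]{n}{}\to\nu$ almost surely, and since $V$ tends to infinity near the boundary, $\nu$ lies in the interior of the simplex, so every coordinate $\nu_k$ is strictly positive. Provided each column of $(a_{k,j})$ has at least one positive entry, the denominators $\sum_k a_{k,j}\hier[y]{n}{k}$ converge to strictly positive limits $\sum_k a_{k,j}\nu_k$, so the map $y\mapsto\sum_j\mu_j a_{i,j}/\sum_k a_{k,j}y_k$ is continuous at $\nu$ and the continuous mapping theorem yields
$$\frac{2(mn+e_0)}{\deg_{G_n}(v)}\pr(\hier[V]{n+1}{\iota}=v)\to\sum_{j=1}^N\mu_j\frac{a_{i,j}}{\sum_{k=1}^N a_{k,j}\nu_k}=\phi_i$$
almost surely, which is the asserted $\phi_i+o(1)$.

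It remains to verify the algebraic identity $\phi_i=2-\mu_i/\nu_i$. Since $\nu$ is an interior zero of the mean field $G$, it is a fixed point of $g$, so using the definition of $\phi_i$, $\nu_i=g_i(\nu)=\half\mu_i+\half\sum_{j=1}^N\mu_j\frac{a_{i,j}\nu_i}{\sum_k a_{k,j}\nu_k}=\half\mu_i+\half\nu_i\phi_i$. Rearranging to $2\nu_i=\mu_i+\nu_i\phi_i$ and dividing by $\nu_i>0$ gives $\phi_i=2-\mu_i/\nu_i$.

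I expect the only delicate point to be confirming that the limiting denominators $\sum_k a_{k,j}\nu_k$ are strictly positive, so that the continuous mapping step is legitimate. This is immediate under the hypotheses of Theorem \ref{overall}, where $\alpha\geq\alpha_0>0$; in the finite case without those hypotheses it follows from the interiority of $\nu$ together with the non-degeneracy of the columns of $(a_{k,j})$ that is already implicit in the finiteness of $V$ used in Proposition \ref{lyapunov}. Everything else reduces to the exact computation above and a one-line rearrangement of the fixed-point equation.
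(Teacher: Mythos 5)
Your computation of the limit is the same as the paper's: condition on $X_{n+1}=z_j$, average over $j$ with weights $\mu_j$, and invoke Proposition \ref{lyapunov} together with continuity of $y\mapsto\sum_j\mu_j a_{i,j}/\sum_k a_{k,j}y_k$ at the interior point $\nu$. Where you genuinely diverge is in establishing $\phi_i=2-\mu_i/\nu_i$. The paper gets the relation $\nu_i=\half(\mu_i+\nu_i\phi_i)$ by a probabilistic edge-counting argument (each new edge has one endpoint whose location has law $\mu$ and one endpoint at a preferentially chosen vertex, which lands at $z_i$ with probability $\nu_i\phi_i+o(1)$), whereas you read the same relation off the fixed-point equation $\nu=g(\nu)$, since $g_i(\nu)=\half\mu_i+\half\nu_i\phi_i$ by definition of $\phi_i$. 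The two identities are literally the same; your derivation is the more self-contained one, and arguably tightens a step the paper leaves informal. The only thing you should make explicit is \emph{why} $\nu$ is a zero of $G$: Proposition \ref{lyapunov} identifies $\nu$ as the minimizer of the convex function $V$ on the simplex, which is a constrained minimum, so a priori one only gets $\partial V/\partial y_i=\lambda$ for a Lagrange multiplier $\lambda$, i.e.\ $G_i(\nu)=-\nu_i\lambda$. The gap closes by summing: since $g$ maps the simplex to itself, $\sum_i G_i(y)=\sum_i g_i(y)-1=0$, forcing $\lambda=0$ and hence $G(\nu)=0$. (Alternatively, a limit point of a stochastic approximation with step sizes of order $1/n$ must lie in the zero set of the drift.) With that one line added, your argument is complete, and your caveat about the denominators $\sum_k a_{k,j}\nu_k$ being positive is correctly resolved by the interiority of $\nu$ plus the finiteness of $V$ at $\nu$.
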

\begin{proof} By Proposition \ref{lyapunov}, $$\frac{2(mn+e_0)}{\deg_{G_n}(v)}\pr(\hier[V]{n+1}{\iota}=v|X_{n+1}=j)=\frac{a_{i,j}}{\sum_{k=1}^N a_{k,j}\nu_k}+o(1),$$ giving $$\frac{2(mn+e_0)}{\deg_{G_n}(v)}\pr(\hier[V]{n+1}{\iota}=v)=\sum_{j=1}^N \mu_j \frac{a_{i,j}}{\sum_{k=1}^N a_{k,j}\nu_k}+o(1)=\phi_i+o(1).$$

To show that $\phi_i=2-\frac{\mu_i}{\nu_i}$, we note that the probability that $\hier[V]{n+1}{\iota}$ is a vertex at location $z_i$ is then $\nu_i\phi_i +o(1)$, which implies \begin{equation}\label{sumnuphi}\sum_{i=1}^N \nu_i \phi_i=1.\end{equation} and, because each edge has one end at a new vertex (location chosen with law $\mu$) and one end at a vertex $\hier[V]{n}{\iota}$ for some $n$ and $\iota$, also implies
\begin{equation}\label{nui}\nu_i=\half(\mu_i+\nu_i\phi_i)\end{equation} and from \eqref{nui} we get $\phi_i=2-\frac{\mu_i}{\nu_i}$.\end{proof}

Propositions \ref{finite} and \ref{lyapunov} prove Theorem \ref{overall} in the finite case.

Let $\hier[p]{n}{d,i}$ be the proportion of the vertices of $G_n$ which are located at $z_i$ and have degree $d$.  We will use the above to show an asymptotic power law for $\hier[p]{n}{d,i}$.  We will need to use the following lemma based on Lemma 2.6 and Corollary 2.7 of \cite{pemantlesurvey}.

\begin{lem}\label{convlemma}
For $n\in \N$, let $A_n$ and $B_n$ be random variables taking non-negative values, $\xi_n$, $R_n$ random variables taking real values, and $k$ a positive constant, such that
$$B_{n+1}-B_{n}=\frac{1}{n}(A_n-k B_n+\xi_n)+R_{n+1}$$ and \begin{enumerate}
\item $A_n\to a$ as $n\to\infty$, almost surely; \item $\sum_{n=1}^{\infty}R_n<\infty$; \item $\E(\xi_n)=0$ and $\xi_n$ is bounded.\end{enumerate} Then $B_n\to \frac{a}{k}$ as $n\to
\infty$, almost surely.
\end{lem}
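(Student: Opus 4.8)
The plan is to turn the given recursion into a contracting scalar linear recursion and solve it with an integrating factor. Writing $C_n=B_n-a/k$ and substituting $B_n=C_n+a/k$ into the hypothesis gives
\[ C_{n+1}=\left(1-\tfrac{k}{n}\right)C_n+\tfrac1n(A_n-a)+\tfrac1n\xi_n+R_{n+1}, \]
so the drift now contracts towards $0$. Fix $n_0>k$ and set $\beta_n=\prod_{j=n_0}^{n-1}(1-k/j)$, which is positive and satisfies $\beta_{n+1}=(1-k/n)\beta_n$; taking logarithms gives $\beta_n\asymp n^{-k}$, in particular $\beta_n\to0$. Dividing the recursion by $\beta_{n+1}$ and summing telescopically expresses $C_{n+1}$ as $\beta_{n+1}$ times the sum of four contributions: the initial term $C_{n_0}/\beta_{n_0}$, and the partial sums of $\beta_{j+1}^{-1}$ against $\tfrac1j(A_j-a)$, against $R_{j+1}$, and against $\tfrac1j\xi_j$. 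It remains to show each of these, once multiplied by $\beta_{n+1}$, tends to $0$.

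First I would dispose of the deterministic terms. The initial contribution is $\beta_{n+1}C_{n_0}/\beta_{n_0}\to0$ since $\beta_{n+1}\to0$. For the $(A_j-a)$ term the coefficients $w_{n,j}=\beta_{n+1}\beta_{j+1}^{-1}j^{-1}\asymp (j/n)^k/j$ have uniformly bounded row sums ($\sum_j w_{n,j}\asymp 1/k$) and vanish columnwise as $n\to\infty$, so this is a regular (Toeplitz) averaging of the sequence $A_j-a$, which converges to $0$ by hypothesis (1); hence the whole term tends to $0$. For the $R$ term I would use that $\beta$ is decreasing, so $\beta_{n+1}\beta_{j+1}^{-1}\le 1$ for $j\le n$; given $\varepsilon$, choosing $M$ with $\sum_{j\ge M}|R_{j+1}|<\varepsilon$ and splitting the sum at $M$ bounds the tail by $\varepsilon$ while sending the (finite) head to $0$ via $\beta_{n+1}\to0$, so by hypothesis (2) this term also vanishes.

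The main obstacle is the martingale noise term, and here the hypothesis $\E(\xi_n)=0$ must be read in its conditional form, $\E(\xi_n\mid\salj_{n-1})=0$ (bare unconditional centring is genuinely insufficient: a single correlated bounded $\xi_n\equiv\eta$ would shift the limit to $(a+\eta)/k$). Under the martingale-difference reading, $W_n=\sum_{j=n_0}^{n}\tfrac1j\xi_j$ is an $L^2$-bounded martingale, because boundedness of $\xi_j$ gives $\sum_j j^{-2}\E(\xi_j^2)<\infty$; hence $W_n\to W_\infty$ almost surely. I would then handle $\beta_{n+1}\sum_{j=n_0}^{n}\beta_{j+1}^{-1}\tfrac1j\xi_j$ by summation by parts against the increasing weights $\beta_{j+1}^{-1}$: this rewrites the quantity as $W_n$ minus a Toeplitz average of $W_{n_0-1},\dots,W_{n-1}$ whose weights $\beta_{n+1}(\beta_{j+2}^{-1}-\beta_{j+1}^{-1})$ are nonnegative and sum to $1-o(1)$, so the average converges to $W_\infty$ and the two contributions cancel, leaving $0$ almost surely. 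Combining the four estimates gives $C_n\to0$, i.e.\ $B_n\to a/k$.

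Alternatively, once the recursion is in the form displayed above one can simply verify the hypotheses of Lemma 2.6 and Corollary 2.7 of \cite{pemantlesurvey} and read off the conclusion; the self-contained route sketched here makes explicit that the only delicate point is the martingale term, for which the conditional centring and boundedness of $\xi_n$ are exactly what is needed for the almost sure convergence of $W_n$.
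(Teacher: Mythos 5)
Your proof is correct, but it takes a genuinely different route from the paper. The paper argues qualitatively: it observes that once $|A_n-a|<\epsilon$, the drift $A_n-kB_n$ is uniformly positive below $\frac{a}{k}-\frac{\delta+\epsilon}{k}$ and uniformly negative above $\frac{a}{k}+\frac{\delta+\epsilon}{k}$, and then invokes the proof of Lemma 2.6 of \cite{pemantlesurvey} to conclude that $B_n$ visits the complement of any neighbourhood of $a/k$ only finitely often. You instead solve the recursion explicitly: after centring at $a/k$ you get a contracting linear recursion, the integrating factor $\beta_n\asymp n^{-k}$ turns it into a telescoping sum, and you kill the four contributions separately (the $A_j-a$ term by a Toeplitz averaging argument, the remainder by splitting, the noise by $L^2$-bounded martingale convergence plus Abel summation). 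Your version is self-contained and more quantitative, at the cost of more bookkeeping; the paper's version is two lines but outsources the real work to Pemantle's lemma. Your remark that hypothesis (3) must be read as conditional centring, $\E(\xi_n\mid\salj_{n-1})=0$, is well taken: as you note, the statement is false under bare unconditional centring, and the paper's own route also needs the conditional form since that is the hypothesis of Lemma 2.6 of \cite{pemantlesurvey}; in the application $\xi_n$ is indeed a martingale difference. One small quibble: your treatment of the remainder term chooses $M$ with $\sum_{j\ge M}|R_{j+1}|<\varepsilon$, which reads hypothesis (2) as absolute summability; that is how the lemma is used ($R_n=O(n^{-2})$), but if you want to cover merely convergent $\sum R_n$ you should handle that term by the same Abel-summation-against-$\beta_{j+1}^{-1}$ device you use for the martingale term rather than by splitting.
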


\begin{proof}
Fix $\delta>0$ and $\epsilon>0$.  We note that if $|A_n-a|<\epsilon$ and $B_n<\frac{a}{k}-\frac{\delta+\epsilon}{k}$ then $A_n-k B_n>\delta$.  The proof of Lemma 2.6 of \cite{pemantlesurvey} and the fact that $|A_n-a|<\epsilon$ if $n$ is large enough now shows that $(B_n)_{n\in\N}$ cannot visit $[0,\frac{a}{k}-2\frac{\delta+\epsilon}{k}]$ infinitely often.  Similarly if $|A_n-a|<\epsilon$ and $B_n>\frac{a}{k}+\frac{\delta+\epsilon}{k}$ then $A_n-k B_n<-\delta$ and so $(B_n)_{n\in\N}$ cannot visit $[\frac{a}{k}+2\frac{\delta+\epsilon}{k},\infty)$ infinitely often.  Hence, $B_n \to \frac{a}{k}$, almost surely.
\end{proof}

We can now prove Theorem \ref{finitedd}.

\textit{Proof of Theorem \ref{finitedd}.} We start by
showing that, when $m>1$, the probability of a multiple edge being
formed at time $n$ from a vertex of degree $d$ is $O(n^{-1})$.  Conditional on $\salj_n$ and
$X_{n+1}=j$, the probability that a vertex $u$ is connected to the
new vertex at least twice is bounded above by
$$
\binom{m}{2}\left(\frac{\deg_{G_n}(u)a_{X_u,j}}{\sum_{k=1}^N  a_{k,j}\hier[Y]{n}{k}}\right)^2
$$ so, conditional on $\salj_n$, it is bounded above by
$$\binom{m}{2}\left(\frac{\deg_{G_n}(u)}{2(mn+e_0)}\right)^2\E\left(
\left(\frac{a_{X_u,X_{n+1}}}{\sum_{k=1}^N
a_{k,X_{n+1}}\hier[y]nk} \right)^2|\salj_n\right),$$ and proposition \ref{lyapunov} ensures that the expectation here is bounded.

Using Proposition \ref{phifinite}, for each $d$,
\begin{equation}\label{proportions}\E(\hier[p]{n+1}{d,i}|\salj_n)=
\frac{1}{n+n_0+1}\left(\hier[p]{n}{d,i}
\left(n+n_0-\phi_i\frac{d}2+o(1)\right)+\hier[p]{n}{d-1,i}
\left(\phi_i\frac{d-1}{2}+o(1)\right)+\mu_i\delta_{m,d}\right)+O(n^{-2})\end{equation}

(where $\delta_{m,d}=1$ if $d=m$ and is zero otherwise) and so
\begin{equation}\label{proportions2}\E(\hier[p]{n+1}{d,i}|\salj_n)-\hier[p]{n}{d,i}=\frac{1}{n+n_0+1}\left(
-\hier[p]{n}{d,i}\left(1+\phi_i\frac{d}2+o(1)\right)+\hier[p]{n}{d-1,i}
\left(\phi_i\frac{d-1}{2}+o(1)\right)+\mu_i\delta_{m,d}\right)+O(n^{-2}).\end{equation}  If $d=m$ \eqref{proportions2} becomes $$\E(\hier[p]{n+1}{m,i}|\salj_n)-\hier[p]{n}{m,i}=\frac{1}{n+n_0+1}\left(
-\hier[p]{n}{m,i}\left(1+\phi_i\frac{m}2+o(1)\right)+\mu_i\right)+O(n^{-2}).$$
so Lemma \ref{convlemma}, with $B_n=\hier[p]{n}{m,i}$, $A_n=\mu_i+o(1)$ and $k=1+\phi_i\frac{m}{2}$, gives
$$\hier[p]{n}{m,i}\to\frac{2\mu_i}{2+m\phi_i},$$ almost surely, as
$n\to\infty$.  When $d>m$ \eqref{proportions2} becomes $$\E(\hier[p]{n+1}{d,i}|\salj_n)-\hier[p]{n}{d,i}=\frac{1}{n+n_0+1}\left(
-\hier[p]{n}{d,i}\left(1+\phi_i\frac{d}2+o(1)\right)+\hier[p]{n}{d-1,i}
\left(\phi_i\frac{d-1}{2}+o(1)\right)\right)+O(n^{-2}),$$ and repeatedly using Lemma \ref{convlemma}, with $B_n=\hier[p]{n}{d,i}$, $A_n=\hier[p]{n}{d-1,i}\left(\phi_i\frac{d-1}{2}+o(1)\right)$ and $k=1+\phi_i\frac{d}{2}$, gives
$$\hier[p]{n}{d,i}\to\frac{2\mu_i}{2+m\phi_i}\prod_{j=m+1}^{d}\frac{j-1}{2\phi_i^{-1}+j}=\frac{2\mu_i}{\phi_i}\frac{\Gamma(m+2\phi^{-1})\Gamma(d)}{
\Gamma(m)\Gamma(d+2\phi^{-1}+1)},$$ almost surely, as
$n\to\infty$.

\qed

\section{The infinite case}\label{infinite}

\subsection{Coupling}\label{coupling}

We partition $S$ into a finite set $\cals=\{S_i, i=1,2,\ldots,N_{\cals}\}$ with each $S_i\in \mathcal{B}(S)$.  Later on we will have each $S_i$ having diameter at most some small value.

We now construct a modified graph process, which will be similar to a finite space geometric preferential attachment, as in section \ref{finite}, on $\{0,1,2,\ldots,N_{\cals}\}$.  The extra point $0$ will be used to allow the construction of a coupling, similar to that in Appendix C of \cite{fitness}, with the geometric preferential attachment on $S$.

For $1 \leq i,j \leq N_{\cals}$, define \begin{eqnarray*}a_{i,j} &=& \sup_{u\in S_i,w\in S_j}\alpha(u,w) \\ b_{i,j} &=& \inf_{u\in S_i,w\in S_j}\alpha(u,w) \\ \gamma_{i,j} &=& b_{i,j}/a_{i,j} \\ \gamma_{\cals} &=& \inf_{1 \leq i,j \leq N_{\cals}} \gamma_{i,j}.\end{eqnarray*}  Also define $\mu_i = \mu(S_i)$ for $i=1,2,\ldots,N_{\cals}$, $\mu_{0}=0$, $h = \sup_{1 \leq i,j \leq N_{\cals}} a_{i,j}$, $a_{i,0}=a_{0,i}=h$ for all $i$.

We will construct a sequence of graphs $(\hier[G]{\cals}n)_{n\in\N}$ whose vertices have locations in $\{0,1,2,\ldots,N_{\cals}\}$ as follows.  We start with a graph $\hier[G]{\cals}0$, and construct $\hier[G]{\cals}{n+1}$ from $\hier[G]{\cals}n$.  Let the total degree of vertices at location $i$ after $n$ steps be $\hier[Y]{n,\cals}{i}$.  At each step we first add a new vertex, which is at location $\hier[X]{\cals}{n+1}$, where $\hier[X]{\cals}{n+1}$ is a random variable taking the value $j$ with probability $\mu_j$.  Conditional on $\hier[X]{\cals}{n+1}=j$, we then add $m$ edges from the new vertex which connect to $m$ vertices $\hier[V]{n+1,\cals}{\iota}$, $\ell=1,2,\ldots,m$ which are chosen independently of each other, with for a vertex $v$ at location $i$ $$\pr(\hier[V]{n+1,\cals}{\iota}=v)=\frac{\deg_{\hier[G]{\cals}n}(v)a_{i,j}\gamma_{\cals}}{\sum_{k=0}^{N_{\cals}} \hier[Y]{n,\cals}{k}a_{k,j}}$$ for $0\leq i\leq N_{\cals}$.  For each edge this leaves a probability  $(1-\gamma_{\cals})$ that it does not connect to any existing vertex.  If this happens, a new vertex is created at location $0$, and the edge connects there.  The interpretation here is that each of the $m$ edges the new vertex tries to connect an old vertex with probability proportional to its degree times an attractiveness factor based on the locations, but that the connection is only accepted with probability $\gamma_{\cals}$, and if the connection is rejected then a new vertex is formed for the connection.  Note that the extra vertices added then behave as the other vertices, with attractiveness $h$ to all other locations.  Following section \ref{finite} let $\hier[y]{n,\cals}{i}=\frac{\hier[Y]{n,\cals}{i}}{2(mn+e_0)}$, the proportion of the total degree at location $i$.

We now show that the geometric preferential attachment process $(G_n)_{n\in\N}$ on $S$ can be coupled to the above process.  In the geometric preferential attachment process, let $\hier[Y]{n}{i}$ be the total degree of the vertices in $S_i$, and let $\hier[y]{n}{i}=\frac{\hier[Y]{n}{i}}{2(mn+e_0)}=\delta_{n}(S_i)$.  We aim to couple the two processes so that we always have $\hier[Y]{n,\cals}{i}\leq \hier[Y]{n}{i}$ for all $1\leq i\leq N_{\cals}$.  To start with, let $\hier[G]{\cals}0$ and $G_0$ be the same graph, with the location $\hier[X]{\cals}{v}$ of a vertex $v$ in $\hier[G]{\cals}0$ being the $i$ such that $X_v\in S_i$ where $X_v$ is the location of the corresponding vertex in $G_0$.  This ensures $\hier[Y]{0,\cals}{i}= \hier[Y]{0}{i}$ for $1\leq i\leq N_{\cals}$.  Then, we claim that the coupling can be done so that $\hier[Y]{n,\cals}{i}\leq \hier[Y]{n}{i}$ implies that $\hier[Y]{n+1,\cals}{i}\leq \hier[Y]{n+1}{i}$ for $1\leq i\leq N_{\cals}$.  Given that $X_{n+1}\in S_j$, that is that the new vertex in $G_{n+1}$ is in $S_j$, which occurs with probability $\mu_j$, let $\hier[X]{\cals}{n+1}=j$ so that the new vertex in $\hier[G]{\cals}{n+1}$ is at location $j$.  Conditional on this and assuming $\hier[Y]{n,\cals}{i}\leq \hier[Y]{n}{i}$, the probability that each new edge in $\hier[G]{\cals}{n+1}$ connects to a vertex at location $i$, $1\leq i\leq N_{\cals}$, is
$$\frac{\hier[Y]{n,\cals}{i} a_{i,j}\gamma_{\cals}}{\sum_{k=1}^{0} \hier[Y]{n,\cals}{k}a_{k,j}} \leq  \frac{\hier[Y]{n,\cals}{i} b_{i,j}}{\sum_{k=0}^{N_{\cals}} \hier[Y]{n,\cals}{k}a_{k,j}}.$$

Now the numerator $$\hier[Y]{n,\cals}{i} b_{i,j}\leq \hier[Y]{n}{i} b_{i,j}=b_{i,j}\sum_{v:X_v\in S_i}\deg_{G_n}(v)\leq \sum_{v\in V(G_n):X_v\in S_i}\deg_{G_n}(v)\alpha(X_v,X_i).$$  For the denominator, define, for $1\leq i\leq N_{\cals}$, $\hier[Z]{n,\cals}{i}=\hier[Y]{n}{i}-\hier[Y]{n,\cals}{i}$, which is non-negative by our assumption.  Then the total degree of $G_n$ and $\hier[G]{\cals}{n}$ is the same, so $\hier[Y]{n,\cals}{0}=\sum_{k=1}^{N_{\cals}}\hier[Z]{n,\cals}{i}$, and thus we can write \begin{eqnarray*}\sum_{k=0}^{N_{\cals}} \hier[Y]{n,\cals}{k}a_{k,j} &=& \hier[Y]{n,\cals}{0}h+\sum_{k=1}^{N_{\cals}} \hier[Y]{n,\cals}{k}a_{k,j}\\ &=& \sum_{k=1}^{N_{\cals}} \left(\hier[Z]{n,\cals}{k}h+\hier[Y]{n,\cals}{k}a_{k,j}\right) \\ & \geq & \sum_{k=1}^{N_{\cals}}\hier[Y]{n}{k}a_{k,j}\\ &\geq & \sum_{v\in V(G_n)}\deg_{G_n}(v)\alpha(X_v,X_{n+1}).\end{eqnarray*}

Hence $$\frac{\hier[Y]{n,\cals}{i} a_{i,j}\gamma_{\cals}}{\sum_{k=1}^{0} \hier[Y]{n,\cals}{k}a_{k,j}}\leq \frac{\sum_{v\in V(G_n):X_v\in S_i}\deg_{G_n}(v)\alpha(X_v,X_{n+1})}{\sum_{v\in V(G_n)}\deg_{G_n}(v)\alpha(X_v,X_{n+1})}= \delta_n(S_i),$$ which is the probability that each new edge in $G_{n+1}$ connects to a vertex in $S_i$.  Hence, for $1\leq i\leq N_{\cals}$, the increase in the total degree at $i$ from $\hier[G]{\cals}n$ to $\hier[G]{\cals}{n+1}$ is at most the increase in the total degree in $S_i$ from $G_n$ to $G_{n+1}$, so $\hier[Y]{n+1,\cals}{i}\leq \hier[Y]{n+1}{i}$.

Hence the coupling ensures that $\hier[Y]{n,\cals}{i}\leq \hier[Y]{n}{i}$ for all $1\leq i\leq N_{\cals}$ and all $n$.

\subsection{Analysis of the coupled process}\label{coupled}

For $1\leq i\leq N_{\cals}$ $$\E(\hier[y]{n+1,\cals}{i}|\salj_n)-\hier[y]{n,\cals}{i} = \frac{2}{2(m+1+e_0/m)}\left(\half\mu_i+\half \sum_{j=1}^{N_{\cals}} \mu_j \frac{\hier[y]{n,\cals}{i}a_{i,j}\gamma_{\cals}}{\sum_{k=0}^{N_{\cals}} \hier[y]{n,\cals}{k}a_{k,j}}-\hier[y]{n,\cals}{i}\right) $$ and $$\E(\hier[y]{n+1,\cals}{0}|\salj_n)-\hier[y]{n,\cals}{N_{\cals}} = \frac{2}{2(m+1+e_0/m)}\left(\half \sum_{j=1}^{N_{\cals}} \mu_j \left( \frac{\hier[y]{n,\cals}{N_{\cals}}h}{\sum_{k=0}^{N_{\cals}} \hier[y]{n,\cals}{k}a_{k,j}}+(1-\gamma_{\cals})\sum_{\ell=1}^{N_{\cals}}\frac{\hier[y]{n,\cals}{\ell}a_{\ell,j}}{\sum_{k=0}^{N_{\cals}} \hier[y]{n,\cals}{k}a_{k,j}}\right)-\hier[y]{n,\cals}{i}\right),$$ giving, for $1\leq i\leq N_{\cals}$, $$\E(\hier[y]{n+1,\cals}{i}|\salj_n)-\hier[y]{n,\cals}{i}=\frac{2}{2(n+1+e_0/m)}\left(\hier[g]{\cals}{i}(\hier[y]{n,\cals}{})-\hier[y]{n,\cals}{i}\right),$$ where $\hier[g]{\cals}{}$ is a map from $\mathcal{Y}_{\cals}:=\{y:y\in\R^{N_{\cals}}, y_i\geq 0 \forall i, \sum_{k=1}^{N_{\cals}} y_k\leq 1\}$ to itself given by the $i$ co-ordinate being $$\hier[g]{\cals}{i}(y)=\half\mu_i+\half \sum_{j=1}^N \mu_j \frac{\gamma_{\cals} a_{i,j} y_i}{\sum_{k=1}^N y_k a_{k,j}}.$$ Alternatively $$ \E(\hier[y]{n+1,\cals}{i}|\salj_n)-\hier[y]{n,\cals}{i} = \frac{2}{2(n+1+e_0/m)}\hier[G]{\cals}{i}(\hier[y]{n,\cals}{i})$$ where $\hier[G]{\cals}{}(y)=\hier[g]{\cals}{}(y)-y$ and so its components are given by $$\hier[G]{\cals}{i}(y)=\half\mu_i+\half \sum_{j=1}^N \mu_j \frac{\gamma_{\cals}a_{i,j} y_i}{\sum_{k=1}^{N_{\cals}} y_k a_{k,j}}-y_i.$$  (Note that $\hier[y]{n,\cals}{0}=1-\sum_{k=1}^{N_{\cals}}\hier[y]{n,\cals}k$.)

\begin{prop}\label{lyapunovdustbin}For each $i$, $0\leq i\leq N_{\cals}$, there exists $\hier[\phi]{\cals}{i}\in\R^+$ such that as $n\to\infty$ we have $$\frac{2(mn+e_0)}{\deg_{\hier[G]{\cals}n}(v)}\pr(\hier[V]{n+1,\cals}{\iota}=v)=\hier[\phi]{\cals}{i}+o(1),$$ almost surely.
\end{prop}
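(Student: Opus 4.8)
The plan is to treat the coupled process exactly as the genuine finite model was treated in Section \ref{finite}, now as a stochastic approximation on the $(N_{\cals}+1)$-point simplex $\{y:y_i\geq 0,\ \sum_{i=0}^{N_{\cals}}y_i=1\}$, driven by the map $\hier[G]{\cals}{}$ already computed above. First I would show that $\hier[y]{n,\cals}{}$ converges almost surely to a point $\nu^{\cals}$ in the interior of this simplex, mirroring Proposition \ref{lyapunov}, and then read off $\hier[\phi]{\cals}{i}$ from the limiting connection probabilities exactly as in Proposition \ref{phifinite}. I may assume $\gamma_{\cals}<1$, since when $\gamma_{\cals}=1$ location $0$ is never used and the process is literally the finite model of Section \ref{finite}.

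For the convergence, the essential step is to exhibit a Lyapunov function for $\hier[G]{\cals}{}$. Writing the denominator as $\sum_{k=0}^{N_{\cals}}y_ka_{k,j}$ and recalling $a_{0,j}=h$, I propose
$$V^{\cals}(y)=1-\half\sum_{j=1}^{N_{\cals}}\mu_j\left(\log y_j+\gamma_{\cals}\log\sum_{k=0}^{N_{\cals}}y_ka_{k,j}\right)-\half(1-\gamma_{\cals})\log y_0,$$
and claim that, on writing the constant as $1=\sum_{i=0}^{N_{\cals}}y_i$ before differentiating (as in Proposition \ref{lyapunov}), one has $\hier[G]{\cals}{i}(y)=-y_i\frac{\partial}{\partial y_i}V^{\cals}(y)$ for every $0\leq i\leq N_{\cals}$; the extra term $-\half(1-\gamma_{\cals})\log y_0$ is exactly what produces the constant inflow $\half(1-\gamma_{\cals})$ of degree into the dustbin location $0$ in the $i=0$ component (using $\sum_{j}\mu_j=1$). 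Since $-\log$ and $-\log(\sum_k y_ka_{k,j})$ are convex and the remaining terms are affine, $V^{\cals}$ is convex; and it tends to infinity as any coordinate approaches zero, including $y_0$ because $\gamma_{\cals}<1$, so it has a unique minimiser $\nu^{\cals}$ in the interior. Proposition 2.18 of \cite{pemantlesurvey} then gives $\hier[y]{n,\cals}{}\to\nu^{\cals}$ almost surely, as in Proposition \ref{lyapunov}.

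Given this, I would finish as in Proposition \ref{phifinite}. For a vertex $v$ at location $i$, $0\leq i\leq N_{\cals}$, the definition of the process gives
$$\frac{2(mn+e_0)}{\deg_{\hier[G]{\cals}n}(v)}\pr(\hier[V]{n+1,\cals}{\iota}=v|\hier[X]{\cals}{n+1}=j)=\frac{\gamma_{\cals}a_{i,j}}{\sum_{k=0}^{N_{\cals}}a_{k,j}\hier[y]{n,\cals}{k}},$$
which converges to $\gamma_{\cals}a_{i,j}/\sum_k a_{k,j}\nu^{\cals}_k$; averaging over $j$ with weights $\mu_j$ yields the claim with
$$\hier[\phi]{\cals}{i}=\sum_{j=1}^{N_{\cals}}\mu_j\frac{\gamma_{\cals}a_{i,j}}{\sum_{k=0}^{N_{\cals}}a_{k,j}\nu^{\cals}_k},$$
which is strictly positive since $\gamma_{\cals}>0$ and each $a_{i,j}\geq\alpha_0$. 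One can moreover read off from $\hier[G]{\cals}{i}(\nu^{\cals})=0$ that $\hier[\phi]{\cals}{i}=2-\mu_i/\nu^{\cals}_i$ for $1\leq i\leq N_{\cals}$ and $\hier[\phi]{\cals}{0}=2-(1-\gamma_{\cals})/\nu^{\cals}_0$, in analogy with \eqref{nui}.

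The main obstacle is getting the Lyapunov function right in the presence of the extra location $0$. The denominators depend on $y_0$ and degree is continually injected at $0$, so the naive transcription of the function from Proposition \ref{lyapunov} does not satisfy the gradient identity; one has to add the weighted term $-\half(1-\gamma_{\cals})\log y_0$ with precisely that coefficient and verify the $i=0$ component separately. Once this is in place, the convexity, the boundary blow-up (which also guarantees $\nu^{\cals}_0>0$, so that $\nu^{\cals}$ is genuinely interior), and the verification of the boundedness and martingale-difference conditions needed for Proposition 2.18 of \cite{pemantlesurvey} all follow the finite-case template, the last because the whole process lives on a compact simplex with increments of size $O(n^{-1})$.
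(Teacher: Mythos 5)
Your proposal is correct and follows essentially the same route as the paper: exhibit a Lyapunov function for the drift map $\hier[G]{\cals}{}$, use convexity and boundary blow-up to obtain a unique interior minimiser, invoke Proposition 2.18 of \cite{pemantlesurvey} for almost sure convergence of $\hier[y]{n,\cals}{}$, and then read off $\hier[\phi]{\cals}{i}$ from the limiting connection probabilities exactly as in Proposition \ref{phifinite}. The only difference is cosmetic: the paper parametrises by $(y_1,\ldots,y_{N_{\cals}})$ with $y_0$ implicit and writes its Lyapunov function without the $\gamma_{\cals}$ weight or an explicit $\log y_0$ term, whereas your version on the full simplex, with the $-\half(1-\gamma_{\cals})\log y_0$ term and the $i=0$ component checked separately, is a more carefully verified form of the same function.
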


\begin{proof}
For $y \in \mathcal{Y}_{\cals}$, let $$V(y)=\sum_{k=1}^{N_{\cals}}y_k-\half\sum_{j=1}^{N_{\cals}}\mu_j\left(\log y_j+\log\sum_{k=1}^{N_{\cals}} y_k a_{k,j}\right).$$
Then for $1\leq i\leq N_{\cals}$, $$G_i(y)=-y_i\frac{\partial}{\partial y_i}V(y),$$ and as $y_i>0$ this means that $V$ is a Lyapunov function for $G$. Again $V$ is a convex function and it tends to infinity as $y_i\to 0$, so it has a unique minimum, at a point which we will call $\hier[\nu]{\cals}{} \in \mathcal{Y}_{\cals}$.

Proposition 2.18 of \cite{pemantlesurvey} now gives $\hier[y]{n,\cals}{}\to \hier[\nu]{\cals}{}$ a.s. as $n\to\infty$.

This shows that for vertex $v$ at location $i$, $0\leq i\leq N_{\cals}$, $$\frac{2(mn+e_0)}{\deg_{\hier[G]{\cals}n}(v)}\pr(\hier[V]{n+1,\cals}{\iota}=v|X_{n+1}=j)=\frac{a_{i,j}\gamma_{\cals}}{\sum_{k=1}^{0} a_{k,j}\hier[\nu]{\cals}{k}}+o(1),$$ giving $$\frac{2(mn+e_0)}{\deg_{\hier[G]{\cals}n}(v)}\pr(\hier[V]{n+1,\cals}{\iota}=v)=\sum_{j=1}^N \mu_j \frac{a_{i,j}\gamma_{\cals}}{\sum_{k=0}^{N_{\cals}} a_{k,j}\hier[\nu]{\cals}{k}}+o(1)=\hier[\phi]{\cals}{i}+o(1),$$ where we define $$\hier[\phi]{\cals}{i} := \sum_{j=1}^N \mu_j \frac{a_{i,j}\gamma_{\cals}}{\sum_{k=1}^{0} a_{k,j}\hier[\nu]{\cals}{k}}.$$
\end{proof}

Let $$t=\frac{\inf_{1\leq i,j \leq N_{\cals}}b_{i,j}}{h}=\frac{\inf_{x,y\in S}\alpha(x,y)}{\sup_{x,y\in S}\alpha(x,y)}.$$  The conditions of Theorem \ref{overall} ensure that $t>0$.

\begin{prop}\label{nuphibound}We have $\hier[\phi]{\cals}{0}\leq \frac{2}{1+t}$ and $\hier[\nu]{\cals}{0}\leq \frac{(1-\gamma_{\cals})(1+t)}{2t}$.
\end{prop}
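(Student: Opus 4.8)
The plan is to argue entirely at the level of the limiting quantities $\hier[\nu]{\cals}{}$ and $\hier[\phi]{\cals}{i}$ supplied by Proposition~\ref{lyapunovdustbin}, deriving for the augmented space $\{0,1,\dots,N_{\cals}\}$ the exact analogues of the balance relations \eqref{sumnuphi}--\eqref{nui}. Writing $D_j=\sum_{k=0}^{N_{\cals}}\hier[\nu]{\cals}{k}a_{k,j}$, the fixed point equation $\hier[g]{\cals}{i}(\hier[\nu]{\cals}{})=\hier[\nu]{\cals}{i}$ gives $\hier[\nu]{\cals}{i}=\half\bigl(\mu_i+\hier[\nu]{\cals}{i}\hier[\phi]{\cals}{i}\bigr)$ for $1\le i\le N_{\cals}$, exactly as in \eqref{nui}. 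For the dustbin the only difference is that new degree at $0$ is created not at rate $\mu_0=0$ but at rate $1-\gamma_{\cals}$, since a proportion $1-\gamma_{\cals}$ of the attaching half-edges are rejected and each opens a fresh degree-one vertex at $0$; book-keeping the degree at $0$ therefore yields $\hier[\nu]{\cals}{0}=\half\bigl((1-\gamma_{\cals})+\hier[\nu]{\cals}{0}\hier[\phi]{\cals}{0}\bigr)$, i.e.\ $\hier[\nu]{\cals}{0}(2-\hier[\phi]{\cals}{0})=1-\gamma_{\cals}$. Summing over all $i$ and using $\sum_{i=1}^{N_{\cals}}\mu_i=1$ recovers $\sum_{i=0}^{N_{\cals}}\hier[\nu]{\cals}{i}\hier[\phi]{\cals}{i}=\gamma_{\cals}$, the analogue of \eqref{sumnuphi} (and the direct expression of the fact that each half-edge is accepted with probability $\gamma_{\cals}$).

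Next I would record the geometric ordering of the fitnesses. Because $a_{0,j}=h$ is the largest entry of the $j$-th column while every genuine entry obeys $th\le a_{i,j}\le h$ by the definition of $t$, dividing by $D_j$ and averaging against $\mu_j$ gives $t\,\hier[\phi]{\cals}{0}\le\hier[\phi]{\cals}{i}\le\hier[\phi]{\cals}{0}$ for $1\le i\le N_{\cals}$. The same estimate applied to the denominator, $D_j=\hier[\nu]{\cals}{0}h+\sum_{i\ge1}\hier[\nu]{\cals}{i}a_{i,j}\ge h\bigl(t+\hier[\nu]{\cals}{0}(1-t)\bigr)$, controls $\hier[\phi]{\cals}{0}$ itself: since $\hier[\phi]{\cals}{0}=\gamma_{\cals}\sum_j\mu_j h/D_j$ we obtain $\hier[\phi]{\cals}{0}\le\gamma_{\cals}\big/\bigl(t+\hier[\nu]{\cals}{0}(1-t)\bigr)$, and in particular the crude bound $\hier[\phi]{\cals}{0}\le\gamma_{\cals}/t$.

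The two assertions are then two faces of one inequality. From the dustbin balance $\hier[\nu]{\cals}{0}(2-\hier[\phi]{\cals}{0})=1-\gamma_{\cals}$, the claim $\hier[\phi]{\cals}{0}\le 2/(1+t)$ is equivalent to $2-\hier[\phi]{\cals}{0}\ge 2t/(1+t)$, hence to $\hier[\nu]{\cals}{0}\le (1-\gamma_{\cals})(1+t)/(2t)$; so it is enough to bound $\hier[\phi]{\cals}{0}$. For that I would eliminate $\hier[\nu]{\cals}{0}$ between $\hier[\phi]{\cals}{0}\bigl(t+\hier[\nu]{\cals}{0}(1-t)\bigr)\le\gamma_{\cals}$ and $\hier[\nu]{\cals}{0}=(1-\gamma_{\cals})/(2-\hier[\phi]{\cals}{0})$, which turns the problem into a single quadratic inequality for $\hier[\phi]{\cals}{0}$.

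Closing this quadratic at the precise constant $2/(1+t)$ is the step I expect to be the main obstacle. The elementary column bound $D_j\ge h(t+\hier[\nu]{\cals}{0}(1-t))$ treats every column as if all its genuine entries were simultaneously at the floor $th$, whereas $h=\sup_{1\le i,j\le N_{\cals}}a_{i,j}$ is actually attained on some cell, so the columns cannot all be extremal at once. I would therefore try to replace the per-column estimate by a $\mu$-weighted one, using the genuine balance identities $\hier[\nu]{\cals}{i}(2-\hier[\phi]{\cals}{i})=\mu_i$ to limit how much $\mu$-mass can sit on columns whose $D_j$ is near its minimum; it is precisely this coupling between the distribution $\mu$ and the spread of the attractiveness that should fix the constant, and making it quantitative is where the real difficulty lies.
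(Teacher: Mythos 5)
You reproduce the paper's proof skeleton faithfully, and in places more carefully than the paper itself. Your balance identities are the ones the paper uses, except that your $\sum_{i=0}^{N_{\cals}}\hier[\nu]{\cals}{i}\hier[\phi]{\cals}{i}=\gamma_{\cals}$ is the correct normalisation (the paper asserts this sum equals $1$, which is inconsistent with the fact that each half-edge is accepted only with probability $\gamma_{\cals}$); your bounds $t\,\hier[\phi]{\cals}{0}\le\hier[\phi]{\cals}{i}\le\hier[\phi]{\cals}{0}$ together with $\hier[\phi]{\cals}{0}\le\gamma_{\cals}/\bigl(t+\hier[\nu]{\cals}{0}(1-t)\bigr)$ are exactly the content of \eqref{phi0}; and your observation that the two assertions are equivalent via $\hier[\nu]{\cals}{0}(2-\hier[\phi]{\cals}{0})=1-\gamma_{\cals}$ is precisely how the paper passes from the first bound to the second via \eqref{nuphi}. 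The gap you flag at the end is real: you do not close the quadratic, and you say so.

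What you should know is that the paper does not close it either. Its final step reads: each new edge has at least one endpoint not at location $0$, so $\hier[\nu]{\cals}{0}\le\half$, hence \eqref{phi0} gives $\hier[\phi]{\cals}{0}\le 2/(1+t)$. But $t+\hier[\nu]{\cals}{0}(1-t)$ is \emph{increasing} in $\hier[\nu]{\cals}{0}$, so the upper bound $\hier[\nu]{\cals}{0}\le\half$ only yields $t+\hier[\nu]{\cals}{0}(1-t)\le(1+t)/2$ and therefore \emph{weakens} the resulting bound on $\hier[\phi]{\cals}{0}$ (to $\hier[\phi]{\cals}{0}\le 1/t$); one would need $\hier[\nu]{\cals}{0}\ge\half$ to conclude, which is the opposite of what holds. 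Moreover the assembled constraints genuinely do not imply the claim: take two cells with $a_{1,1}=h$, all other entries near $th$, almost all the $\mu$-mass on cell $2$, and $\gamma_{\cals}=1-\delta$ with $\delta$ small. The unique fixed point then has $\hier[\nu]{\cals}{0}$ close to $(1-2t)/(2(1-t))$, bounded away from $0$ as $\delta\to0$, and $\hier[\phi]{\cals}{0}$ close to $2$; for instance $t=0.1$, $\gamma_{\cals}=0.99$ gives $\hier[\nu]{\cals}{0}\approx0.445$ against the claimed bound $0.055$, and $\hier[\phi]{\cals}{0}\approx1.98$ against $2/(1+t)\approx1.82$. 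The mechanism is that dustbin vertices carry the maximal attractiveness $h$, so although they are created rarely they accumulate degree at up to $1/t$ times the typical rate. So your difficulty is not a failure of ingenuity: the constant $2/(1+t)$ cannot be extracted from these ingredients, and the proposition as stated appears to require an additional hypothesis relating $\gamma_{\cals}$ to $t$ (or a modified dustbin construction) before any proof along these lines can succeed.
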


\begin{proof}
We have $$\hier[\nu]{\cals}{0}=\half((1-\gamma_{\cals})+\hier[\nu]{\cals}{0}\hier[\phi]{\cals}{0}),$$ giving \begin{equation}\label{nuphi}\hier[\nu]{\cals}{0}=\frac{1-\gamma_{\cals}}{2-\hier[\phi]{\cals}{0}}.\end{equation}

We also have $$\sum_{i=0}^{N_{\cals}} \hier[\nu]{\cals}{i} \hier[\phi]{\cals}{i}=1,$$ and by the definition of $t$ we have $$\hier[\phi]{\cals}{i}\geq t\hier[\phi]{\cals}{0}.$$  By $\sum_{i=0}^{N_{\cals}}\hier[\nu]{\cals}{i}=1$ we obtain \begin{equation}\label{phi0}\hier[\phi]{\cals}{0}(\hier[\nu]{\cals}{0}+t(1-\hier[\nu]{\cals}{0}))\leq 1.\end{equation}
Now, each new edge has at least one endpoint not at a vertex at location $0$, so $\hier[\nu]{\cals}{0}\leq \half$.  Hence \eqref{phi0} implies $$\hier[\phi]{\cals}{0}\leq \frac{2}{1+t},$$ and hence by \eqref{nuphi} $$\hier[\nu]{\cals}{0}\leq \frac{1-\gamma_{\cals}}{2-\frac{2}{1+t}}=\frac{(1-\gamma_{\cals})(1+t)}{2t}.$$
\end{proof}

\subsection{Approximating $S$}

We use the coupling in the previous section to complete the proof of Theorem \ref{overall}.

\begin{prop}\label{nuborel}Let $A\subseteq S$ be a Borel set.  Then there exists $\hat{\nu}(A)$ such that as $n\to\infty$ $\delta_n(A) \to \hat{\nu}(A)$, almost surely.\end{prop}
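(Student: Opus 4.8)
The plan is to deduce the existence of $\lim_n \delta_n(A)$ for an \emph{arbitrary} Borel set $A$ from the one-sided coupling bound of Section \ref{coupling}, applied along a sequence of partitions whose mesh shrinks to zero and, crucially, which are adapted to $A$. First I would fix $A$ and, for each $r\in\N$, construct a finite Borel partition $\cals^{(r)}=\{S^{(r)}_i\}$ of $S$ all of whose cells have diameter at most $1/r$ and each of which is contained in either $A$ or $A^c$. Such a partition exists by compactness: cover $S$ by finitely many balls of radius $1/(2r)$, disjointify them, and intersect each resulting piece with $A$ and with $A^c$. The feature I want is that $A=\bigcup\{S^{(r)}_i:S^{(r)}_i\subseteq A\}$ is then \emph{exactly} a union of cells of $\cals^{(r)}$ (and likewise $A^c$).

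Next, for each $r$ I would run the coupling of Section \ref{coupling} with $\cals=\cals^{(r)}$. The coupling gives $\hier[y]{n,\cals}{i}\leq \delta_n(S^{(r)}_i)$, while Proposition \ref{lyapunovdustbin} gives $\hier[y]{n,\cals}{i}\to\hier[\nu]{\cals^{(r)}}{i}$ almost surely, so $\liminf_n \delta_n(S^{(r)}_i)\geq \hier[\nu]{\cals^{(r)}}{i}$ for every cell. Since the cells partition $S$ we have $\sum_i \delta_n(S^{(r)}_i)=1$ and $\sum_{i=0}^{N_{\cals}}\hier[\nu]{\cals^{(r)}}{i}=1$; applying the $\liminf$ bound both to a union $U$ of cells and to its complementary union $U^c$ yields
$$\sum_{S^{(r)}_i\subseteq U}\hier[\nu]{\cals^{(r)}}{i}\leq \liminf_n \delta_n(U)\leq \limsup_n \delta_n(U)\leq \sum_{S^{(r)}_i\subseteq U}\hier[\nu]{\cals^{(r)}}{i}+\hier[\nu]{\cals^{(r)}}{0}.$$
Taking $U=A$, which is legitimate precisely because $\cals^{(r)}$ is adapted to $A$, shows that the oscillation $\limsup_n \delta_n(A)-\liminf_n \delta_n(A)$ is at most $\hier[\nu]{\cals^{(r)}}{0}$.

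Finally I would let $r\to\infty$. Because $\log\alpha$ is Lipschitz on the compact space $S\times S$, say with constant $L$ in each argument, on a cell of diameter at most $1/r$ we have $a_{i,j}/b_{i,j}\leq \rme^{2L/r}$, so $\gamma_{\cals^{(r)}}\geq \rme^{-2L/r}\to 1$; Proposition \ref{nuphibound} then forces $\hier[\nu]{\cals^{(r)}}{0}\leq \frac{(1-\gamma_{\cals^{(r)}})(1+t)}{2t}\to 0$, with $t>0$ the fixed constant there. The oscillation of the single sequence $(\delta_n(A))_n$ does not depend on $r$, yet is bounded by $\hier[\nu]{\cals^{(r)}}{0}$ for every $r$, hence it is zero; so $\delta_n(A)$ converges almost surely, and I set $\hat\nu(A):=\lim_n \delta_n(A)$. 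The bookkeeping needed is that each use of Proposition \ref{lyapunovdustbin} holds on a probability-one event, and that the inequality $\liminf_n \delta_n(S^{(r)}_i)\geq \hier[\nu]{\cals^{(r)}}{i}$ involves only the $(G_n)$-marginal of the coupling, so it holds with probability one under the true law; intersecting over the countably many pairs $(r,i)$ leaves one probability-one event on which the whole argument is pathwise.

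The substantive point, and the reason the conclusion holds for every Borel set rather than only for $\hat\nu$-continuity sets, is the adaptation of the partitions to $A$ in the first step: it makes $A$ an exact union of cells, so the inner and outer approximations coincide and no boundary term survives. I expect the main obstacle to be exactly this adaptation, together with the measure-theoretic care of transferring the almost-sure coupling bounds to the marginal and combining countably many partitions; the remaining estimates (uniform continuity giving $\gamma_{\cals^{(r)}}\to 1$, hence $\hier[\nu]{\cals^{(r)}}{0}\to 0$) are routine given Propositions \ref{lyapunovdustbin} and \ref{nuphibound}.
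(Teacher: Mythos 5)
Your proposal is correct and follows essentially the same route as the paper: partitions of mesh $\to 0$ adapted to $A$, the one-sided coupling bound applied to both $A$ and $A^c$, the Lipschitz property of $\log\alpha$ forcing $\gamma_{\cals}\to 1$, and Proposition \ref{nuphibound} to kill the mass at the extra location $0$. The only cosmetic difference is at the end, where you observe directly that the oscillation of $\delta_n(A)$ is independent of the partition and hence zero, while the paper passes to a subsequence along which $\sum_{i:S_i\subseteq A}\hier[\nu]{\cals}{i}$ converges; both finish the argument.
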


\begin{proof}Given $\epsilon>0$ we can construct a partition $\cals=\{S_1,S_2,\ldots,S_{N_{\cals}}\}$ of $S$ where each set $S_i,1\leq i\leq N_{\cals}$ has diameter at most $\epsilon$ and such that $A$ is the union of sets in $\cals$.  Then assuming $\log \alpha$ is a Lipschitz function (in both components) with Lipschitz constant $K$, we have for $1\leq i,j\leq N_{\cals}$ that $0\leq \log a_{i,j}-\log b_{i,j}\leq 2K\epsilon$ and so $\gamma_{\cals}\geq e^{-2K\epsilon}$.  The analysis in section \ref{coupled} shows that $$\delta_n(A)=\frac{1}{2(mn+e_0)}\sum_{i:S_i\subseteq A} \hier[Y]{n}{i}\geq \frac{1}{2(mn+e_0)}\sum_{i:S_i\subseteq A} \hier[Y]{n,\cals}{i}$$ and similarly that $$1-\delta_n(A)\geq \frac{1}{2(mn+e_0)}\sum_{i:S_i\subseteq A^c}\hier[Y]{n,\cals}{i}.$$  Furthermore as $n\to\infty$ $$\frac{1}{2(mn+e_0)}\sum_{i:S_i\subseteq A} \hier[Y]{n,\cals}{i}\to \sum_{i:S_i\subseteq A} \hier[\nu]{\cals}{i}$$ and $$\frac{1}{2(mn+e_0)}\sum_{i:S_i\subseteq A^c} \hier[Y]{n,\cals}{i} \to  \sum_{i:S_i\subseteq A^c} \hier[\nu]{\cals}{i}.$$  But by Proposition \ref{nuphibound}, \begin{eqnarray*} \sum_{i:S_i\subseteq A^c} \hier[\nu]{\cals}{i} &\geq & 1-\frac{(1-\gamma_{\cals})(1+t)}{2t}-\sum_{i:S_i\subseteq A} \hier[\nu]{\cals}{i} \\ &\geq& 1-\frac{(1-e^{2K\epsilon})(1+t)}{2t}-\sum_{i:S_i\subseteq A} \hier[\nu]{\cals}{i}.\end{eqnarray*}

So, almost surely, $$\liminf_{n\to\infty} \delta_n(A) \geq \sum_{i:S_i\subseteq A} \hier[\nu]{\cals}{i}$$ and $$\limsup_{n\to\infty} \delta_n(A) \leq \sum_{i:S_i\subseteq A} \hier[\nu]{\cals}{i}+\frac{(1-e^{2K\epsilon})(1+t)}{2t}.$$  As $t$ and $K$ are constants, taking a sequence of partitions $\cals$ such that $\epsilon\to 0$ gives us the result, as there will be a subsequence such that $\sum_{i:S_i\subseteq A} \hier[\nu]{\cals}{i}$ is convergent.\end{proof}

\textit{Proof of Theorem \ref{overall}.} Proposition \ref{nuborel} implies, by applying it individually to
each element in the set of closed balls with rational radii at
points in a countable dense subset of $S$ (which exists because a
compact metric space is separable) and using these to approximate
closed subsets of $S$, that, $\pr$-almost surely, that we have
$\limsup_{n\to\infty}\delta_{n}(A)= \hat{\nu}(A)$ for
all closed $A\subseteq S$.

Now, for closed subsets $A$ of $S$, define $$\nu'(A)=\inf_{B\mbox{ open,}A\subseteq B}\hat{\nu}(B),$$ and for open subsets $A$ of $S$ define $\nu'(A)=1-\nu'(A^c)$.  Then for closed sets $A$ we have $\nu'(A)\geq \hat{\nu}(A)$, and for open sets $A$ we have $\nu'(A) \leq \hat{\nu}(A)$.  By compactness, there will be a subsequence of $(\delta_n)_{n\in\N}$ which has a weak limit $\nu$ which is a probability measure on $S$.  Now  $\nu'(A)\geq \nu(A)$ for all closed sets $A$, but if $B$ is open with $A\subseteq B$ then $\nu(B)\geq \nu'(B)$, giving $\nu'(A)\geq \nu(A)$ on taking infima, hence $\nu(A)=\nu'(A)$ for all open and closed subsets of $S$.  Hence, $\pr$-almost surely, $\delta_n$ converges weakly to $\nu$.

Finally, if we define $$\phi(u)=\int_S\frac{ \alpha(u,y)}{\int_S \alpha(x,y)\;d\nu(x)}\;d\mu(y)$$ then $$\pr(\hier[V]{n+1}{\iota}=v)\frac{2(mn+e_0)}{\deg_{G_n}(v)}=\phi(X_v)+o(1).$$ \qed

\subsection{Proof of Theorem \ref{infinitedd}}

Let $A\in\mathcal{B}(S)$ be a Borel set with $\mu(A)>0$.  Let $\hier[p]{n}{d,A}$ be the proportion of vertices in $G_n$ which are of degree $d$ and have locations in $A$.  Let $\phi_A=\sup_{x\in A}\phi(x)$ and $\psi_A=\inf_{x\in A}\phi(x)$.

Fix $\phi$, and assume $\mu(A)<\frac{2}{\phi}-1$.  [If $\mu(A)$ is larger than this, partition $A$ into smaller sets for which the condition does hold.]  Then consider a graph process $(\tilde{G}_n)_{n\in \N}$ where if vertex $v$ is located in $A$ then $\pr(\hier[\tilde{V}]{n+1}{\iota}=v)=\phi\frac{\deg_{G_n}(v)}{2(mn+e_0)}$ for $n$ large enough.  (For any $\epsilon>0$, the total degree in $A$ will be at most $2(mn+e_0)(\frac{\phi}{2}(1+\mu(A))+\epsilon)$ for $n$ large enough, so the condition on $\mu(A)$ ensures that this is possible.)  We do not assume independence of $\hier[\tilde{V}]{n+1}{\iota}$ for different $\iota$ but do assume that $\pr(\hier[\tilde{V}]{n+1}{\iota_1}=\hier[\tilde{V}]{n+1}{\iota_2})=O(n^{-1}).$  Letting $\hier[\tilde{p}]{n}{d,A}$ being the proportion of vertices of $\tilde{G}_n$ which are of degree $d$ and in $A$, then the same argument as in the proof of Theorem \ref{finitedd} in section \ref{finite} shows that $$\hier[\tilde{p}]{n}{d,A}\to \frac{2\mu(A)}{\phi}\frac{\Gamma(m+2\phi^{-1})\Gamma(d)}{
\Gamma(m)\Gamma(d+2\phi^{-1}+1)},$$ almost surely.

If $\phi_A<\phi$, then we can couple the geometric preferential attachment process $(G_n)_{n\in\N}$ to a process of the above form such that for vertices in $A$ the degree is always at least as high in $\tilde{G}_n$ as in $G_n$.  (Give the new vertex the same location in each process, and then it is always possible to ensure $\pr(\hier[\tilde{V}]{n+1}{\iota}=v)>\pr(\hier[V]{n+1}{\iota}=v)$ for $v\in A$ with $(\tilde{G}_n)_{n\in \N}$ as described above.)

This ensures that the proportion of vertices of $G_n$ which are in $A$ and of degree at most $d$ satisfies satisfies $$\sum_{k=m}^{d} \hier[p]{n}{k,A}\geq \sum_{k=m}^{d} \hier[\tilde{p}]{n}{k,A}.$$  Hence, almost surely, $$\liminf_{n\to\infty} \sum_{k=m}^{d} \hier[p]{n}{k,A}\geq \frac{2\mu(A)}{\phi}\frac{\Gamma(m+2\phi^{-1})}{\Gamma(m)}\sum_{k=m}^{d} \frac{\Gamma(k)}{
\Gamma(k+2\phi^{-1}+1)},$$ for any $\phi>\phi_A$.

Similarly, almost surely, $$\limsup_{n\to\infty} \sum_{k=m}^{d} \hier[p]{n}{k,A}\leq \frac{2\mu(A)}{\phi}\frac{\Gamma(m+2\phi^{-1})}{\Gamma(m)}\sum_{k=m}^{d} \frac{\Gamma(k)}{
\Gamma(k+2\phi^{-1}+1)},$$ for any $\phi<\psi_A$.\qed

\end{document}